\definecolor{mblue}{rgb}{0,0,.8}
\theoremstyle{plain}
\newtheorem{thm}{Theorem}[section]
\newtheorem{lemma}[thm]{Lemma}
\newtheorem{proposition}[thm]{Proposition}
\newtheorem*{prop}{Proposition}
\newtheorem{corollary}[thm]{Corollary}
\theoremstyle{definition}
\newtheorem{definition}[thm]{Definition}
\theoremstyle{remark}
\newtheorem{remark}[thm]{Remark}
\numberwithin{equation}{section}
\newcommand{\N}{{\mathfrak N}}
\newcommand{\ind}{\operatorname{Ind}}
\newcommand{\gl}{\operatorname{GL}}
\newcommand{\slll}{\operatorname{SL}}
\newcommand{\f}{\mathbb F}
\newcommand{\diag}{\operatorname{diag}}
\title[On systems of Hecke eigenvalues]{On systems of Hecke eigenvalues in cohomology of certain subgroups of $\gl_n(F)$}
\author{Morten S. Larsen}
\address{Department of Mathematics, University of Copenhagen, Universitetsparken 5, DK-2100 Copenhagen \O ,
Denmark.}
\email{\href{mailto:msl@math.ku.dk}{msl@math.ku.dk}}
\begin{document}

\begin{abstract}
We show how there is a natural action on the cohomology groups attached to certain subgroups of $\gl_n(F)$ of the Hecke operators defined as elements in an adelic double coset algebra.

Our main result is, that if a system of eigenvalues for Hecke operators occur in the cohomology groups with coefficients in certain modules, then by changing the groups the system also occur in the cohomology groups with coefficients in a $1$-dimensional module.
\end{abstract}

\maketitle


\noindent
In the following $F$ will be a number field. The ring of integers in $F$ will be denoted $\mathcal O_F$, and $C$ will denote the class group of $F$. We denote by $\f$ any given fixed field. For a ring $R$, we will denote the multiplicative group of invertible elements by $R^*$. Every module here considered will be right modules, though at appropriate places these may be considered as left modules with inverse action.

The Hecke operators will be defined as elements of an double coset algebra for a group $K$.
The group $K$ will be certain open subgroups of $\gl_n(\mathbb A)$, where $\mathbb A$ is the adeles of $F$, and we show that a double coset algebra of $K$ has a natural action on the cohomology $\bigoplus_{c\in C} H^i(\Gamma_c,V)$ for some subgroups $\Gamma_c\subseteq \gl_n(F)$ indexed by the class group $C$ and a coefficient module $V$.
This double coset algebra will have the corresponding properties to the algebras considered in \cite[\S 3.2]{shi} for $F=\mathbb Q$ the rational numbers. The need to consider the adeles arises due to nontrivial class group.

This was done in \cite{byg} for n = 2, who extended a method pioneered
and first developed by J.E. Cremona, \cite{cre}. Thus, the formalism of
Hecke operators as here defined is inspired by those works,
and can in fact also give an action on such maps on lattices, and this
in such a way that the two definitions agree.

If we only consider the part of the double coset algebra corresponding to principal ideals, then we get an action on $H^i(\Gamma_c,V)$. A system of eigenvalues is `almost' determined by this restricted action.
By only considering the action on $H^i(\Gamma_c,V)$ we will be able to use ideas from \cite{ash}. That is, we shall define Hecke pairs of level $\mathfrak N$ for an ideal $\mathfrak N$ of $F$ and admissible modules $V$.
The module $V$ will be a finite dimensional vector space over $\f$,
and our main result and primary motivation is to show that any system
of eigenvalues occurring in the cohomology with coefficients V also
occur in the cohomology with 1-dimensional coefficients.
We give two results, one valid for any field $\f$, and the other valid only when $\f$ has characteristic $\ell>0$.

The motivation to investigate how systems of eigenvalues occur in the cohomology for different coefficient modules comes from the case $F=\mathbb Q$ and $n=2$. As in \cite{as2} this corresponds to congruence properties between Hecke eigenvalues for modular forms of different level and weight.
To a system of Hecke eigenvalues for a modular form is attached a Galois representation, such that the characteristic polynomial for the Frobenius element of almost all primes is determined by the Hecke eigenvalues. In \cite{ash} it is then conjectured that any system of eigenvalues occurring in the cohomology with coefficient an admissible module correspond to a Galois representation, $F=\mathbb Q$. To what extend this is valid for any number field $F$ would be interesting to find out.

\section{Double cosets}\label{ddc}\noindent
Here we bring a summary of double cosets and the corresponding maps on the cohomology groups.

Let $G$ be a group, and let $\Gamma$ and $\Gamma'$
be subgroups of $G$. The groups $\Gamma$ and $\Gamma'$ are said to
be commensurable if $\Gamma\cap \Gamma'$ has finite index in both
$\Gamma$ and $\Gamma'$. Let $\Delta$ be a set of elements $\delta
\in G$ such that $\Gamma$ is commensurable to $\delta\Gamma'
\delta^{-1}$. We will call the triple $(\Gamma,\Gamma',\Delta)$ for a Hecke triple.
Denote with $R(\Gamma,\Gamma',\Delta)$ the space of
finite $\mathbb Z$-linear combinations of double cosets $\Gamma
\delta \Gamma'$, $\delta\in \Delta$. Equivalent to the assumption of
commensurability is that $\Gamma \delta \Gamma'= \bigcup \Gamma
\delta_j $ with a finite number of $\delta_j\in G$. Hence
$R(\Gamma,\Gamma',\Delta)$ can also be described as the space of
finite $\mathbb Z$-linear combinations of cosets $\Gamma \delta_j$,
$\delta_j\in \Delta \Gamma'$, which is right invariant under
multiplication with $\gamma'\in \Gamma'$. Hence we will note an element in $R(\Gamma,\Gamma',\Delta)$ simply as $\sum a_j\Gamma\delta_j$ a finite sum with $a_j\in \mathbb Z$. It makes no difference if we assume $\Delta=\Gamma\Delta \Gamma'$, so this will be in effect.

Let $\Gamma''$ be a third subgroup of $G$, and  $\Delta'$ be a
subset of $G$ such that $(\Gamma',\Gamma'',\Delta')$ is a Hecke triple.
There is a composition
\begin{eqnarray}\label{compdc}
R(\Gamma,\Gamma',\Delta)\times R(\Gamma',\Gamma'',\Delta') \to
R(\Gamma,\Gamma'',\Delta\Delta')
\end{eqnarray}
given by
$$ (\sum a_j \Gamma \delta_j,\sum a'_i\Gamma' \delta'_i) \mapsto \sum a_ja'_i \Gamma  \delta_j\delta'_i
$$
If $\Gamma'=\Gamma$ then we write
$R(\Gamma,\Delta)=R(\Gamma,\Gamma,\Delta)$.
If $\Delta$ is a Semigroup we call $(\Gamma,\Delta)$ for a Hecke pair, and $R(\Gamma,\Delta)$ is an $\mathbb Z$-algebra.

Let $V$ be a module for some semigroup containing $\Delta$, $\Gamma$ and $\Gamma'$, then the elements of $R(\Gamma,\Gamma',\Delta)$ correspond naturally to maps on the cohomology groups
$$H^i(\Gamma,V)\to H^i(\Gamma',V)$$
given in the following way.
Let $\tilde f\in H^i(\Gamma,V)$ and $f:\Gamma^{i+1}\to V$ be a homogeneous $i$-cocycle lying in the cohomology class $\tilde f$, then  $\sum a_j\Gamma \delta_j \in R(\Gamma,\Gamma', \Delta)$ determines a homogeneous $i$-cocycle $f': \Gamma'^{i+1}\to V$ given by
$$f'(\gamma_0',\ldots, \gamma_i')=\sum_j a_j f(t_j(\gamma_0'),\ldots,t_j(\gamma_i'))\delta_j$$
Here $\gamma_0',\ldots,\gamma_i'\in\Gamma'$ and $t_j:\Gamma'\to \Gamma$ is determined such that $t_j(\gamma')\delta_r=\delta_j\gamma'$ for some $r$. The cohomology class of $f'$ is uniquely determined by $\sum \Gamma\delta_j$ and will be denoted $\tilde f|\sum a_j \Gamma \delta_j$. For details in the case $\Gamma=\Gamma'$ see \cite{rw} or \cite{kps}.

If we also have the Hecke triple $(\Gamma',\Gamma'',\Delta')$, and assuming that $V$ is a module for some semigroup also containing  $\Gamma''$ and $\Delta'$, then composition of maps on the cohomology groups coincides with the composition of the double cosets \eqref{compdc}. In particular if $(\Gamma,\Delta)$ is a Hecke pair then $H^i(\Gamma,V)$ is a $R(\Gamma,\Delta)$-module.

For a Hecke pair $(\Gamma,\Delta)$ the long exact cohomology sequence is a exact sequence of $R(\Gamma,\Delta)$-modules. From \cite{as1} we need the notion of compatibility.
\begin{definition} Two Hecke pairs $(\Gamma,\Delta)$ and $(\Gamma',\Delta')$ are compatible if $\Gamma\subseteq \Gamma'$, $\Delta\subseteq \Delta '$ and
\begin{itemize}
\item $\Gamma'\cap \Delta \Delta^{-1}= \Gamma$,
\item $\Gamma' \Delta = \Delta'$.
\end{itemize}
\end{definition}

If $V$ is a $\Delta$-module and $[\Gamma':\Gamma]<\infty$ then we will consider the induced $\Gamma'$-module $\ind(\Gamma,\Gamma',V)$ as the set of functions $f:\Gamma' \to V$ satisfying $f(\gamma\gamma')=f(\gamma')\gamma^{-1}$ for every $\gamma\in \Gamma$, $\gamma'\in \Gamma'$. The action of $\tilde \gamma'\in \Gamma'$ is given by $(f\tilde \gamma')(\gamma')=f(\gamma'\tilde \gamma'^{-1})$.

If $(\Gamma,\Delta)\hookrightarrow(\Gamma' ,\Delta')$ are compatible Hecke pairs, then there is an injective homomorphism $R(\Gamma',\Delta')\to R(\Gamma,\Delta)$. The induced module $\ind(\Gamma,\Gamma',V)$ is naturally a $\Delta'$-module. The action of $\delta'\in\Delta'$ given by
 $$(f\delta')(\gamma')=f(\hat \gamma ')\delta$$
with $\delta\in\Delta$ and $\hat\gamma'\in \Gamma'$ satisfying $\gamma'\delta'^{-1}=\delta^{-1}\hat \gamma'$.
This makes the Shapiro isomorphism an isomorphism of $R(\Gamma',\Delta')$-modules.

\begin{lemma}\label{conj} Let $g,g'\in G$ and $V$ a right $G$ module. Then there is a bijection
$R(\Gamma,\Gamma',\Delta)\to R(g\Gamma g^{-1},g'\Gamma' g'^{-1}, g\Delta g'^{-1})$,
$T\mapsto \hat T$, and isomorphisms $H^i(\Gamma,V)\simeq H^i(g\Gamma g^{-1},V)$,
$H^i(\Gamma',V)\simeq H^i(g'\Gamma' g'^{-1}, V)$ such that the following diagram
commutes
$$ \begin{matrix} H^i(\Gamma,V) & \simeq & H^i(g\Gamma g^{-1}, V) \\
\downarrow T & &\downarrow \hat T \\
H^i(\Gamma', V) & \simeq & H^i(g'\Gamma'g'^{-1}, V)
\end{matrix}$$
\end{lemma}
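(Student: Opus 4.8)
The plan is to define everything by transport of structure via the conjugation maps and then check compatibility with the cocycle formula. First I would define the bijection on double cosets: given $T = \sum a_j \Gamma \delta_j \in R(\Gamma,\Gamma',\Delta)$, set $\hat T = \sum a_j (g\Gamma g^{-1})(g\delta_j g'^{-1})$. One checks this is well-defined and bijective because left multiplication $x \mapsto gxg'^{-1}$ carries cosets $\Gamma\delta_j$ bijectively onto cosets $(g\Gamma g^{-1})(g\delta_j g'^{-1})$, carries the right $\Gamma'$-invariance of $\sum a_j\Gamma\delta_j$ into right $g'\Gamma' g'^{-1}$-invariance, and carries $\Delta$ into $g\Delta g'^{-1}$; commensurability is preserved since $(g\Gamma g^{-1}) \cap (g\delta_j g'^{-1})(g'\Gamma' g'^{-1})(g\delta_j g'^{-1})^{-1} = g\bigl(\Gamma \cap \delta_j \Gamma' \delta_j^{-1}\bigr)g^{-1}$, which has the same (finite) indices.

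Next I would write down the isomorphism $H^i(\Gamma,V) \simeq H^i(g\Gamma g^{-1},V)$. On homogeneous cochains $f : \Gamma^{i+1}\to V$ define $(c_g f)(x_0,\dots,x_i) = f(g^{-1}x_0 g,\dots,g^{-1}x_i g)\,g^{-1}$ for $x_0,\dots,x_i \in g\Gamma g^{-1}$; here the trailing $g^{-1}$ uses the right $G$-action on $V$. This is a cochain map (it commutes with the simplicial differential since the differential only inserts/deletes entries and the $g^{-1}$-twist is applied uniformly), it is invertible with inverse $f \mapsto f(gx_0g^{-1},\dots)\,g$, so it induces the claimed isomorphism on cohomology; similarly define $c_{g'}$ for $\Gamma'$ and $g'$.

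The main work — and the one genuinely computational step — is verifying that the square commutes, i.e. that $c_{g'}\circ T = \hat T \circ c_g$ on the level of the explicit cocycle formula. Starting from $f'(\gamma_0',\dots,\gamma_i') = \sum_j a_j f(t_j(\gamma_0'),\dots,t_j(\gamma_i'))\,\delta_j$ for $T$, one applies $c_{g'}$ and compares with the cocycle obtained by first applying $c_g$ and then $\hat T$, whose associated $t$-maps $\hat t_j$ are determined by $\hat t_j(y')(g\delta_r g'^{-1}) = (g\delta_j g'^{-1}) y'$ for $y' \in g'\Gamma' g'^{-1}$. The point is that $\hat t_j(y') = g\, t_j(g'^{-1}y' g')\, g^{-1}$ satisfies exactly this relation (this is where the relation $t_j(\gamma')\delta_r = \delta_j\gamma'$ gets conjugated), and then the trailing twists match: $c_{g'}$ contributes a $g'^{-1}$, while $\hat T$ multiplies by $g\delta_j g'^{-1}$ against a cochain already carrying a $g^{-1}$ from $c_g$, and $g^{-1}\cdot(g\delta_j g'^{-1}) = \delta_j g'^{-1}$, reproducing the $\delta_j$ of $f'$ followed by the $g'^{-1}$ of $c_{g'}$. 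I expect the only obstacle to be bookkeeping of which coset representatives $\delta_r$ get chosen and checking the formula is independent of those choices, which is already subsumed in the well-definedness of the maps $T$ and $\hat T$ on cohomology recalled in the text; so once the $t$-map identity above is in hand the diagram commutes on cochains, a fortiori on cohomology.
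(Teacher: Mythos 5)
Your proposal is correct and follows essentially the same route as the paper: the same bijection $\Gamma\delta\Gamma'\mapsto g\Gamma\delta\Gamma' g'^{-1}$ and the same conjugation-and-twist maps $f\mapsto f(g^{-1}\cdot g)\,g^{-1}$ on homogeneous cochains. The only difference is that you write out the commutativity check via the identity $\hat t_j(y') = g\,t_j(g'^{-1}y'g')\,g^{-1}$, which the paper leaves as an ``immediately seen'' computation; your verification of it is accurate.
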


\begin{proof}
The mapping $R(\Gamma,\Gamma', \Delta) \to R(g\Gamma g^{-1},g'\Gamma'
g'^{-1}, g\Delta g'^{-1})$ is given by $\Gamma \delta \Gamma' \mapsto
 g \Gamma \delta \Gamma' g'^{-1}$.

Let $f: \Gamma^{i+1}\to V$ be an homogeneous $i$-cocycle. Define the
$i$-cocycle $\hat f: (g\Gamma g^{-1})^{i+1}\to V$ by
$\hat f( \overline \gamma)=f(g^{-1}\overline \gamma g) g^{-1}$. This
gives an isomorphism commuting with the coboundary, hence an
isomorphism of the cohomology groups. In the same way we have an isomorphism $H^i(\Gamma',V)\simeq H^i(g'\Gamma' g'^{-1},V)$ where an $i$-cocycle $f'$ correspond to $\hat f'$ given by $\hat f'(\overline{\gamma'})=f'(g'^{-1}\overline \gamma' g') g'^{-1}$.

Let $T=\sum a_j \Gamma\delta_j \in R(\Gamma,\Gamma',\Delta)$, then $\hat
T=\sum a_j g\Gamma g^{-1} g\delta_jg'^{-1}$. Writing down the maps, it is immediately seen that the given diagram commutes.
\end{proof}

\section{Hecke action on cohomology groups}\label{heckeaction}\noindent
We will now for certain adelic Hecke pairs $(K,\Delta)$ describe how to find subgroups $\Gamma_c \subseteq \gl_n(F)$, $c\in C$, such that there is a natural action of the algebra $R(K,\Delta)$ on $\bigoplus_{c\in C} H^i(\Gamma_c, V)$ for some module $V$.

Let $\mathbb A$ denote the adeles of $F$. For $\alpha\in \gl_1(\mathbb A)$ we will  write $(\alpha)$ for the fractional ideal determined by $\alpha$ which only depends upon the finite part.
Denote by $F_\infty$ the infinite part of $\mathbb A$. Let $F_\infty^*=\gl_1(F_\infty)$ and $F_\infty^+$ be the subgroup of $F_\infty^*$ consisting of the totally positive elements. Meaning for every real prime the elements of $F_\infty^+$ are positive. For $\alpha\in \gl_1(\mathbb A)$ the image in $F_\infty^*/F_\infty^+$ of $\alpha$ will be called the sign of $\alpha$. If $\alpha$ has the same sign as $1$ then $\alpha$ is totally positive. For any set $S\subseteq \gl_n(\mathbb A)$ let $S^+$ denote the subset of $S$ consisting of elements with totally positive determinant.
The only role the infinity part of $\gl_n(\mathbb A)$ will play here, is so we can keep track of the sign of the determinant.

In the following $(K,\Delta)$ will be a Hecke pair $K,\Delta\subseteq \gl_n(\mathbb A)$. Furthermore we assume that $(\kappa)$ is a principal ideal for every $\kappa\in K$.

Set $\Delta^{(c)}=\{\delta \in \Delta| (\det \delta)\in c\}$ for $c\in C$.
Then $R(K,\Delta)=\bigcup_{c\in C} R(K,\Delta^{(c)})$ as a graded algebra, with the subalgebra $R(K,\Delta^{(1)})$.

A system of eigenvalues for $R(K,\Delta)$ is an algebra homomorphism $\Phi:R(K,\Delta)\to \f$.
Let $J$ be the group of ideals for $F$, and $\chi:J\to \f^*$ a homomorphism. Define
$\chi\otimes \Phi$ by $\chi \otimes \Phi(K\delta K) = \chi((\det \delta)) \Phi(K\delta K)$. Then $\chi\otimes \Phi$ is also an algebra homomorphism.

\begin{lemma}\label{unramifiedtwist} Let $\Phi,\Psi:R(K,\Delta)\to \f$ be algebra homomorphisms. If $\Phi$ and
$\Psi$ is equal on $R(K,\Delta_1)$, then there exist an character $\chi:C\to \f^*$
such that $\Phi=\chi\otimes \Psi$.
\end{lemma}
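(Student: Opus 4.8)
The plan is to exploit the graded-algebra structure $R(K,\Delta)=\bigoplus_{c\in C}R(K,\Delta^{(c)})$ together with the fact that $\Phi$ and $\Psi$ are algebra homomorphisms agreeing on the degree-$1$ (i.e. principal-ideal) part $R(K,\Delta_1)=R(K,\Delta^{(1)})$. The character $\chi$ we seek should measure, componentwise, the ratio between $\Phi$ and $\Psi$ on each graded piece; the content of the lemma is that this ratio is well-defined, lives in $\f^*$, and is multiplicative in $c$.

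First I would fix, for each class $c\in C$, an element $T_c\in R(K,\Delta^{(c)})$ with $\Psi(T_c)\ne 0$, and set $\chi(c)=\Phi(T_c)/\Psi(T_c)$ — provided such a $T_c$ exists. The existence of some double coset $K\delta K$ with $(\det\delta)\in c$ and nonzero $\Psi$-value is exactly the point where one uses that both homomorphisms are \emph{nonzero} (which is implicit: an algebra homomorphism to a field is here assumed unital, so $\Phi(1)=\Psi(1)=1$, and $1\in R(K,\Delta^{(1)})$); combined with surjectivity of $\det:\Delta\to C$ onto the whole class group (which holds because $R(K,\Delta)=\bigcup_c R(K,\Delta^{(c)})$ forces every $c$ to occur), this guarantees a candidate $T_c$, after possibly rescaling. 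The next step is to show $\chi(c)$ does not depend on the choice of $T_c$: if $T_c,T_c'$ both have nonzero $\Psi$-value, then $T_c\cdot T_c'^{\,\sigma}$ — here I pair $T_c$ with an element $T_{c^{-1}}$ of the inverse class, so that $T_c T_{c^{-1}}\in R(K,\Delta^{(1)})$ where $\Phi=\Psi$ — yields $\Phi(T_c)\Phi(T_{c^{-1}})=\Psi(T_c)\Psi(T_{c^{-1}})$, forcing $\Phi(T_c)/\Psi(T_c)=\Psi(T_{c^{-1}})/\Phi(T_{c^{-1}})$ to be independent of the representative, and simultaneously showing $\chi(c)\chi(c^{-1})=1$, so $\chi(c)\in\f^*$.

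With $\chi$ well-defined I would then verify multiplicativity: for $c,d\in C$ pick $T_c,T_d$ with nonzero $\Psi$-values; then $T_cT_d\in R(K,\Delta^{(cd)})$ and $\Psi(T_cT_d)=\Psi(T_c)\Psi(T_d)\ne0$, so $\chi(cd)=\Phi(T_cT_d)/\Psi(T_cT_d)=\bigl(\Phi(T_c)/\Psi(T_c)\bigr)\bigl(\Phi(T_d)/\Psi(T_d)\bigr)=\chi(c)\chi(d)$. Finally, to conclude $\Phi=\chi\otimes\Psi$ it suffices to check the identity on a spanning set, namely on single double cosets $K\delta K$ with $(\det\delta)\in c$. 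Writing $S=K\delta K\in R(K,\Delta^{(c)})$, one has $S\cdot T_{c^{-1}}\in R(K,\Delta^{(1)})$, hence $\Phi(S)\Phi(T_{c^{-1}})=\Psi(S)\Psi(T_{c^{-1}})$; using $\Phi(T_{c^{-1}})=\chi(c^{-1})\Psi(T_{c^{-1}})=\chi(c)^{-1}\Psi(T_{c^{-1}})$ and $\Psi(T_{c^{-1}})\ne0$ gives $\Phi(S)=\chi(c)\Psi(S)=\chi((\det\delta))\Psi(S)=(\chi\otimes\Psi)(S)$, as desired.

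The main obstacle is the very first step: ensuring that for every class $c$ there is an element of $R(K,\Delta^{(c)})$ on which $\Psi$ does not vanish. If the grading were such that $\Psi$ annihilated an entire graded component $R(K,\Delta^{(c)})$, then $\chi(c)$ could not be defined as a nonzero scalar — and indeed one must rule this out, since otherwise $\Psi$ would annihilate $R(K,\Delta^{(c)})\cdot R(K,\Delta^{(c^{-1})})\ni 1$, contradicting $\Psi(1)=1$; so the nonvanishing is forced, but this argument (and the implicit standing assumption that systems of eigenvalues are unital ring maps) is the crux and should be stated carefully. Everything else is the routine bookkeeping of a graded algebra.
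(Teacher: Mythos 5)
The construction hinges on the claim that for every class $c\in C$ there exists $T_c\in R(K,\Delta^{(c)})$ with $\Psi(T_c)\neq 0$, and the argument you give for this, namely that otherwise $\Psi$ would annihilate $R(K,\Delta^{(c)})\cdot R(K,\Delta^{(c^{-1})})$ which you claim contains $1$, does not work. The set $\Delta$ is only a semigroup (think of the model case $\Delta_0$, integral adelic matrices), so double cosets are not units of $R(K,\Delta)$: a product $K\delta K\cdot K\delta' K$ with $(\det\delta)\in c$ and $(\det\delta')\in c^{-1}$ is a sum of cosets $K\delta'' K$ with $(\det\delta'')=(\det\delta)(\det\delta')$ a nontrivial (principal) ideal, and the identity coset $K$ never appears in it. Concretely, since $R(K_0,\Delta_0)$ is a polynomial ring in the generators $T_{\mathfrak p}^{(m)}$, one obtains unital algebra homomorphisms $\Psi$ by assigning arbitrary values to these generators; sending every generator of nonprincipal determinant class to $0$ gives a $\Psi$ with $\Psi(1)=1$ that vanishes identically on every nonprincipal graded component. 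So the nonvanishing is not forced, the ratio $\chi(c)=\Phi(T_c)/\Psi(T_c)$ need not be definable, and your proof breaks down exactly at the step you yourself flagged as the crux.

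For comparison, the paper does not try to force nonvanishing. It first notes that $\Phi(K\delta K)=0$ if and only if $\Psi(K\delta K)=0$ (for instance by raising the coset to the order of its determinant class, landing in the principal part where $\Phi=\Psi$), and then writes $\Phi(K\delta K)=\chi'(\delta)\Psi(K\delta K)$ for a function $\chi':\Delta\to\f^*$ that is simply unconstrained wherever both values vanish; the freedom in choosing $\chi'$ is then used to make it constant on determinant classes and multiplicative. Your computations (well-definedness of the ratio by pairing with the inverse class, multiplicativity, and the final identity $\Phi(S)=\chi(c)\Psi(S)$) are essentially the same manipulations as the paper's and do go through on the set of classes where $\Psi$ is not identically zero, which is a subgroup of $C$; but to repair your argument you would still have to treat the classes where $\Psi$ vanishes identically and extend the character from that subgroup to all of $C$, an issue your write-up does not address.
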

\begin{proof}
Let $K\delta K\in R(K,\Delta)$, and note that $\Phi(K\delta K)=0$ if and only if
$\Psi(K\delta K)=0$. Hence we can find a map $\chi': \Delta \to \f^*$ such that
$\Phi(K\delta K)=\chi'(\delta)\Psi(K\delta K)$.

Let $\delta_1,\delta_2 \in \Delta$ be such that $(\det \delta_1)$ is in the same ideal class
as $(\det \delta_2)$. Choose $\delta\in \Delta$ such that $(\det \delta \delta_1)$ is principal and such that $\Phi (K \delta K)\neq 0$ if $\Phi(K\delta_j K)\neq 0$ for $j=1$ or $j=2$.
Then $$\Psi(K\delta K\cdot K\delta_j K)
=\Phi(K\delta K) \Phi(K\delta_j K)= \chi'(\delta)\chi'(\delta_j) \Psi(K\delta K \cdot
K\delta_j K )$$
and hence we may choose $\chi'$ to satisfy $\chi'(\delta_j)=\chi'(\delta)^{-1}$ for $j=1,2$. So $\chi'$ depends only on the ideal class of the determinant.

Let now $\delta_1$ and $\delta_2$ be any elements of $\Delta$, and write $K\delta_1 K\cdot K\delta_2 K= \sum K\delta_i K$.
Then $(\det \delta_i)=(\det \delta_1)(\det \delta_2)$ for every $\alpha$, so
\begin{eqnarray}\nonumber
\chi'(\delta_1)\chi'(\delta_2)\Psi(K\delta_1K)\Psi(K\delta_2 K)=\Phi(K\delta_1 K \cdot K\delta_2 K)=\sum \Phi(K \delta_i K) \\
=\chi'(\delta_1\delta_2)(\sum \Psi(K\delta_i K))=\chi'(\delta_1\delta_2)\Psi(K\delta_1K )\Psi(K\delta_2 K)
\nonumber\end{eqnarray}
Hence we may choose $\chi'$ such that $\chi'(\delta_1)\chi'(\delta_2)=\chi'(\delta_1\delta_2)$.
The map $\chi: C\to \f^*$ determined by $\chi'$ is then a homomorphism and satisfies $\Phi=\chi\otimes \Psi$.
\end{proof}

Fix elements $\alpha_c\in \gl_n(\mathbb A)$ for each $c\in C$ such that $(\det \alpha_c)\in c$.
We shall consider open subgroups $K$ of $\gl_n(\mathbb A)$ such that the determinant is surjective
$$\bigcup_{c\in C} \alpha_c \gl_n(F)K \to \gl_1(\mathbb A)$$
The reason is the following proposition.

\begin{proposition} \label{strongap}
Let $K$ be an open subgroup of $\gl_n(\mathbb A)$.
Then we have the disjoint union
\begin{eqnarray}\label{fulddet}\nonumber
\bigcup_{c\in C}\alpha_c \gl_n(F) K = \gl_n(\mathbb A)
\end{eqnarray}
if and only if the determinant
$\cup_{c\in C} \alpha_c \gl_n(F)K \to \gl_1(\mathbb A)$
 is surjective.

\end{proposition}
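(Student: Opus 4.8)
The plan is to prove the two implications separately; the ``only if'' direction is immediate and the content is entirely in the ``if'' direction, which I would reduce to strong approximation for $\slll_n$.

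For the easy implication, note that $\det$ maps $\gl_n(\mathbb A)$ onto $\gl_1(\mathbb A)$, so if $\bigcup_{c\in C}\alpha_c\gl_n(F)K=\gl_n(\mathbb A)$ then the restriction of $\det$ to this union is already surjective onto $\gl_1(\mathbb A)$. For the converse, let $g\in\gl_n(\mathbb A)$ be arbitrary. By hypothesis there are $c\in C$, $\gamma\in\gl_n(F)$, $\kappa\in K$ with $\det(\alpha_c\gamma\kappa)=\det g$, and then $h:=\gamma^{-1}\alpha_c^{-1}g\kappa^{-1}$ lies in $\slll_n(\mathbb A)$. If I can factor $h=\gamma_1\kappa_1$ with $\gamma_1\in\slll_n(F)$ and $\kappa_1\in K\cap\slll_n(\mathbb A)$, then $g=\alpha_c(\gamma\gamma_1)(\kappa_1\kappa)$ with $\gamma\gamma_1\in\gl_n(F)$ and $\kappa_1\kappa\in K$, so $g\in\alpha_c\gl_n(F)K$. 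Thus everything comes down to the identity
\begin{equation*}
\slll_n(\mathbb A)=\slll_n(F)\cdot\bigl(K\cap\slll_n(\mathbb A)\bigr).
\end{equation*}

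Establishing this identity is the step I expect to be the main obstacle, and I would deduce it from strong approximation for the simply connected group $\slll_n$ (for $n=1$ it is vacuous). Write $\mathbb A=F_\infty\times\mathbb A_f$ with $\mathbb A_f$ the finite adeles. Since $K$ is open, $U:=K\cap\slll_n(\mathbb A)$ is an open subgroup of $\slll_n(\mathbb A)$; the point to check first is that the archimedean component of $U$ is all of $\slll_n(F_\infty)$, which holds because any open subgroup of $\gl_n(F_\infty)$ contains its identity component $\gl_n(F_\infty)^+\supseteq\slll_n(F_\infty)$. Hence $U\supseteq\slll_n(F_\infty)\times V$ for some compact open subgroup $V\subseteq\slll_n(\mathbb A_f)$. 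Strong approximation says that $\slll_n(F)$ is dense in $\slll_n(\mathbb A_f)$ (using that $\slll_n(F_\infty)$ is noncompact), so writing $h=(h_\infty,h_f)$ the open set $h_fV$ contains some $\gamma_1\in\slll_n(F)$; then $\kappa_1:=\gamma_1^{-1}h$ has finite part in $V$ and archimedean part in $\slll_n(F_\infty)$, so $\kappa_1\in U$ and $h=\gamma_1\kappa_1$, as needed.

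Finally I would record the disjointness. If $\alpha_c\gamma\kappa=\alpha_{c'}\gamma'\kappa'$ with $\gamma,\gamma'\in\gl_n(F)$, $\kappa,\kappa'\in K$, then comparing the associated fractional ideals of the determinants and using that $(\det\gamma)$, $(\det\gamma')$ are principal together with the standing assumption that $(\kappa)$, $(\kappa')$ are principal, we find that $(\det\alpha_c)$ and $(\det\alpha_{c'})$ lie in the same ideal class, i.e.\ $c=c'$. So the union is automatically disjoint, and the only delicate ingredients are the appeal to strong approximation and the verification that the archimedean part of $K$ is large enough; everything else is bookkeeping.
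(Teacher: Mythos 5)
Your proof is correct and takes essentially the same route as the paper: the ``if'' direction is reduced to the identity $\slll_n(\mathbb A)=\slll_n(F)\,(K\cap\slll_n(\mathbb A))$ and disjointness follows from the standing assumption that $(\kappa)$ is principal for every $\kappa\in K$. The only difference is that the paper simply cites Kneser's strong approximation theorem for that identity, while you derive it from density of $\slll_n(F)$ in $\slll_n(\mathbb A_f)$ together with the (correct) observation that the archimedean part of the open subgroup $K$ contains $\slll_n(F_\infty)$.
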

\begin{proof}
The `only if' is clear. We show the `if' part.
From strong approximation for $\operatorname{SL}_n$ we have, \cite{kn},
\begin{eqnarray} \label{stsl}
\operatorname{SL}_n(F)(\operatorname{SL}_n(\mathbb A)\cap K)=\operatorname{SL} _n(\mathbb A)
\end{eqnarray}

Let $\beta \in \gl_n(\mathbb A)$. By assumption we have a $c\in C$, $\alpha\in \gl_n(F)$, and $\kappa\in K$ such that $\det \beta= \det (\alpha_c \alpha \kappa)$. So $\alpha^{-1} \alpha_c^{-1} \beta \kappa^{-1}\in \operatorname{SL}_n(\mathbb A)$. By \eqref{stsl} we have $\tilde \alpha \in \gl_n(F)$ and $\tilde \kappa\in K$ so
$\alpha^{-1} \alpha_c^{-1} \beta \kappa^{-1}=\tilde \alpha \tilde \kappa$
giving
$$\beta= \alpha_c \alpha \tilde \alpha \tilde \kappa \kappa \in \alpha_c \gl_n(F) K$$
The union is disjoint by the standing assumption on $K$, that $(\kappa)$ is principal for every $\kappa \in K$
\end{proof}

Notice that with the assumption of proposition \ref{strongap} if $\beta_1,\beta_2 \in \gl_n(\mathbb A)$ with $\beta_1\beta_2\in \alpha_c \gl_n(F) K$ then
$$\alpha_c\gl_n(F)K =\beta_1\gl_n(F)\beta_2 K = K\beta_1 \gl_n(F)\beta_2$$

For every $c\in C$ set $\widetilde \Gamma_{c}= K\cap \alpha_c \gl_n(F) \alpha_c^{-1}$ and $\Gamma_c=\alpha_c^{-1} \widetilde \Gamma_{c} \alpha_c =\alpha_c^{-1} K\alpha_c \cap \gl_n(F)$. For every $c,c'\in C$ let $\widetilde \Delta_{c,c'}= \Delta\cap \alpha_c \gl_n(F) \alpha_{c'}^{-1}$ and $\Delta_{c,c'}=\alpha_c^{-1}\widetilde \Delta_{c,c'} \alpha_{c'}=\alpha_c^{-1} \Delta \alpha_{c'} \cap \gl_n(F)$.
\begin{proposition}  \label{nedigen}
Let $(K,\Delta)$ be a Hecke pair with the group $K$ such that the determinant $\cup_{c\in C} \alpha _c \gl_n(F)K\to \gl_1(\mathbb A)$ is surjective.
With the notation as above, there is an injection
\begin{eqnarray}\label{dennemor}
R(K,\Delta^{(c)})\to R(\widetilde \Gamma_{c'}, \widetilde \Gamma_{c'c}, \widetilde \Delta_{c',c'c})\simeq R(\Gamma_{c'}, \Gamma_{c'c}, \Delta_{c',c'c})
\end{eqnarray}
for every $c,c'\in C$ given by
$$\sum a_j K \delta_j \mapsto \sum a_j K\delta_j \cap \widetilde\Delta_{c',c'c} \textrm{ resp. } \sum a_j K\delta_j \mapsto \sum a_j \alpha_{c'}^{-1}K \delta_j\alpha_{c'c} \cap \Delta_{c',c'c}$$
These maps  form together an injective algebra homomorphism
\begin{eqnarray} \nonumber R(K,\Delta)=\bigoplus_{c\in C}R(K,\Delta^{(c)}) &\to & \bigoplus_{c\in C}\prod_{c'\in C} R(\widetilde \Gamma_{c'},\widetilde \Gamma_{c'c},\widetilde \Delta_{c',c'c})
\\&\simeq& \bigoplus_{c\in C}\prod_{c'\in C} R(\Gamma_{c'}, \Gamma_{c'c}, \Delta_{c',c'c})
\nonumber
\end{eqnarray}
\end{proposition}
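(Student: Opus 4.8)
The plan is to reduce the statement to the combinatorics of double cosets in $\gl_n(\mathbb A)$, the essential input being strong approximation. First, by Lemma~\ref{conj} applied to conjugation by $\alpha_{c'}$ and by $\alpha_{c'c}$, the isomorphisms $R(\widetilde\Gamma_{c'},\widetilde\Gamma_{c'c},\widetilde\Delta_{c',c'c})\simeq R(\Gamma_{c'},\Gamma_{c'c},\Delta_{c',c'c})$ exist and carry the two displayed formulas for the map into one another, so one may work throughout with the $\widetilde\Gamma$'s and $\widetilde\Delta$'s. I will use repeatedly that $\Delta$ is a semigroup containing $1$, so that $K\subseteq\Delta$, hence $\widetilde\Gamma_{c'}\subseteq K\subseteq\Delta$, each $\widetilde\Delta_{c,c'}\subseteq\Delta$, and $\widetilde\Delta_{c,c'}\,\widetilde\Delta_{c',c''}\subseteq\widetilde\Delta_{c,c''}$.

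The heart of the matter is the following local statement about single left cosets. Fix $c,c'$, let $K\delta K$ be the double coset representing a given basis element of $R(K,\Delta^{(c)})$, and write $K\delta K=\bigsqcup_j K\delta_j$. Then each $K\delta_j\cap\widetilde\Delta_{c',c'c}$ is a single left $\widetilde\Gamma_{c'}$-coset; in particular it is nonempty. The nonemptiness is where strong approximation is used: by Proposition~\ref{strongap} the whole of $K\delta K$ is contained in one coset $\alpha_d\gl_n(F)K$, and — the matching of determinant ideal classes making the remark after Proposition~\ref{strongap} applicable with $\beta_1=\alpha_{c'}$, $\beta_2=\alpha_{c'c}^{-1}$ — that coset equals $K\alpha_{c'}\gl_n(F)\alpha_{c'c}^{-1}$; thus $\delta_j=\kappa\alpha_{c'}g\alpha_{c'c}^{-1}$ for some $\kappa\in K$ and $g\in\gl_n(F)$, whence $\kappa^{-1}\delta_j\in K\delta_j\cap\widetilde\Delta_{c',c'c}$ (it lies in $\Delta$ since $\kappa^{-1}\in K\subseteq\Delta$ and $\delta_j\in\Delta$). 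That the intersection lies in a single $\widetilde\Gamma_{c'}$-coset holds because any two of its elements $x,y$ satisfy $xy^{-1}\in K\cap\alpha_{c'}\gl_n(F)\alpha_{c'}^{-1}=\widetilde\Gamma_{c'}$; and it is a full $\widetilde\Gamma_{c'}$-coset because $\gamma\in\widetilde\Gamma_{c'}\subseteq K$ sends any element $y$ of it to $\gamma y\in K\delta_j\cap\alpha_{c'}\gl_n(F)\alpha_{c'c}^{-1}\cap\Delta$.

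Granting this, the remaining assertions are formal. On basis elements the map of \eqref{dennemor} sends $K\delta K$ to $\sum_j K\delta_j\cap\widetilde\Delta_{c',c'c}$, which by the local statement is a disjoint union of $\widetilde\Gamma_{c'}$-cosets — distinct because $\widetilde\Gamma_{c'}\subseteq K$ — invariant under $\widetilde\Gamma_{c'}$ on the left and $\widetilde\Gamma_{c'c}$ on the right, hence a single double coset $\widetilde\Gamma_{c'}\tilde\delta\,\widetilde\Gamma_{c'c}$; in particular $(\widetilde\Gamma_{c'},\widetilde\Gamma_{c'c},\widetilde\Delta_{c',c'c})$ is a Hecke triple, and the map extends $\mathbb Z$-linearly to \eqref{dennemor}. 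It is injective: it sends the $\mathbb Z$-basis of double cosets of $R(K,\Delta^{(c)})$ to distinct basis elements of the target ($\widetilde\Gamma_{c'}\tilde\delta\widetilde\Gamma_{c'c}=\widetilde\Gamma_{c'}\tilde\delta'\widetilde\Gamma_{c'c}$ forces $K\tilde\delta K=K\tilde\delta'K$ as $\widetilde\Gamma_{c'},\widetilde\Gamma_{c'c}\subseteq K$), none of which is zero. Finally, to see that assembling these maps over all $c$ and $c'$ gives an algebra homomorphism: the target $\bigoplus_{c}\prod_{c'}R(\widetilde\Gamma_{c'},\widetilde\Gamma_{c'c},\widetilde\Delta_{c',c'c})$ carries the graded multiplication in which the degree-$c_1$ and degree-$c_2$ parts pair into degree $c_1c_2$ via the composition \eqref{compdc} of $R(\widetilde\Gamma_{c'},\widetilde\Gamma_{c'c_1},-)$ with $R(\widetilde\Gamma_{c'c_1},\widetilde\Gamma_{c'c_1c_2},-)$, which is associative by associativity of \eqref{compdc}; and for double cosets $T_m=K\delta_mK$ with $\delta_m\in\Delta^{(c_m)}$ one chooses, using the local statement, the coset representatives so that $\delta_{1,j}\in\widetilde\Delta_{c',c'c_1}$ and $\delta_{2,i}\in\widetilde\Delta_{c'c_1,c'c_1c_2}$; then the image of $T_1$ in degree $c'$ is $\sum_j\widetilde\Gamma_{c'}\delta_{1,j}$ and the image of $T_2$ in degree $c'c_1$ is $\sum_i\widetilde\Gamma_{c'c_1}\delta_{2,i}$, their product by \eqref{compdc} is $\sum_{j,i}\widetilde\Gamma_{c'}\delta_{1,j}\delta_{2,i}$, while $T_1T_2=\sum_{j,i}K\delta_{1,j}\delta_{2,i}$ and $\delta_{1,j}\delta_{2,i}\in\widetilde\Delta_{c',c'c_1}\widetilde\Delta_{c'c_1,c'c_1c_2}\subseteq\widetilde\Delta_{c',c'c_1c_2}$ lies in the left coset $K\delta_{1,j}\delta_{2,i}$, which by the local statement meets $\widetilde\Delta_{c',c'c_1c_2}$ in exactly $\widetilde\Gamma_{c'}\delta_{1,j}\delta_{2,i}$, so the image of $T_1T_2$ is the same sum.

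The only genuinely substantive point is the local statement of the second paragraph — specifically the nonemptiness of $K\delta_j\cap\widetilde\Delta_{c',c'c}$, which is an application of Proposition~\ref{strongap}. Everything else is the standard Hecke-algebra bookkeeping; the one thing that needs care is keeping track of the ideal class of the determinant throughout, so that the cosets $\alpha_d\gl_n(F)K$, $\alpha_{c'}\gl_n(F)\alpha_{c'c}^{-1}$ and $K\delta K$ are correctly aligned when the remark after Proposition~\ref{strongap} is invoked.
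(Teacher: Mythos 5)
Your overall route is the paper's: strong approximation (Proposition \ref{strongap} and the remark following it) gives nonemptiness of $K\delta_j\cap\widetilde\Delta_{c',c'c}$, the coset bookkeeping gives well-definedness and multiplicativity exactly as in the paper's commuting-diagram check, and Lemma \ref{conj} supplies the conjugation isomorphism. Your ``local statement'' (each $K\delta_j\cap\widetilde\Delta_{c',c'c}$ is exactly one left $\widetilde\Gamma_{c'}$-coset) is correct and is in fact a slightly sharper packaging of what the paper proves via the implication $\kappa\delta\in\widetilde\Delta_{c',c'c}\Rightarrow\kappa\in\widetilde\Gamma_{c'}$.

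There is, however, one genuine gap: the step ``invariant under $\widetilde\Gamma_{c'}$ on the left and $\widetilde\Gamma_{c'c}$ on the right, \emph{hence} a single double coset $\widetilde\Gamma_{c'}\tilde\delta\,\widetilde\Gamma_{c'c}$'' is a non sequitur --- two-sided invariance only makes the image a (finite) \emph{union} of double cosets. Worse, the claim you are asserting is not an innocent strengthening: if the image of every $K\delta K$ were a single double coset, then (since that image always contains $\widetilde\Gamma_{c'}\tilde\delta\widetilde\Gamma_{c'c}$ for any $\tilde\delta$ in it) every basis double coset of $R(\widetilde\Gamma_{c'},\widetilde\Gamma_{c'c},\widetilde\Delta_{c',c'c})$ would be in the image, i.e.\ the maps \eqref{dennemor} would be surjective --- precisely the statement the paper defers to a later remark and proves only for Hecke pairs of level $\mathfrak N$, using reduction mod $(\det\delta)$ and strong approximation for $\slll_n$; it is not claimed under the hypotheses of Proposition \ref{nedigen}. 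Since your injectivity argument is phrased through this claim (``sends the basis to distinct basis elements''), it inherits the gap. The repair is easy and is what the proposition actually needs: membership in $R(\widetilde\Gamma_{c'},\widetilde\Gamma_{c'c},\widetilde\Delta_{c',c'c})$ only requires the image to be a right-$\widetilde\Gamma_{c'c}$-invariant finite sum of left $\widetilde\Gamma_{c'}$-cosets, which your local statement plus $\widetilde\Delta_{c',c'c}\widetilde\Gamma_{c'c}\subseteq\widetilde\Delta_{c',c'c}$ already gives; and injectivity follows because the images of distinct double cosets $K\delta K\neq K\delta' K$ are supported on disjoint sets of left $\widetilde\Gamma_{c'}$-cosets (equality of two such cosets would force equality of the ambient left $K$-cosets), each image being nonzero. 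Your multiplicativity argument does not use the single-coset claim and stands as written. A minor point: you assume $1\in\Delta$ to get $K\subseteq\Delta$; this is not among the hypotheses, but every use can be replaced by the paper's standing convention $\Delta=K\Delta K$, which gives $K\Delta\subseteq\Delta$ and $\Delta K\subseteq\Delta$.
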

\begin{proof} Recall that $\Delta^{(c)}=\{\delta \in \Delta| (\det \delta)\in c\}$. By proposition \ref{strongap} the intersection $K\delta \cap\widetilde \Delta_{c',c'c}$ is not empty exactly when $\delta\in \Delta^{(c)}$.
Hence for $K\delta K \in R(K,\Delta^{(c)})$ we may assume  $\delta\in \widetilde \Delta_{c',c'c}$. Let $\sum a_j K\delta_j\in R(K,\Delta^{(c)})$ with $\delta_j\in \widetilde \Delta_{c',c'c}$.
For every $\widetilde \gamma \in \widetilde \Gamma_{c'c}$ and every $j$ we have $\delta_j \widetilde \gamma= \kappa_j\gamma_i$ for some $\kappa_j\in K$ and some $i$.
For $\kappa\in K$ and $\delta\in \widetilde \Delta_{c',c'c}$ we have that $\kappa \delta\in \widetilde \Delta_{c',c'c}$ precisely when $\kappa\in \widetilde \Gamma_{c'}$, hence $\kappa_j\in \widetilde \Gamma_{c'}$. This shows that $\sum a_j\widetilde \Gamma_{c'}\delta_j$ is an element of $R(\widetilde \Gamma_{c'},\widetilde \Gamma_{c'c},\Delta_{c',c'c})$. So we do have a mapping, and it is clearly injective. Note that if $c$ is the principal ideal class, then this is the same as saying that $(\Gamma_{c'},\Delta_{c',c'})\hookrightarrow (K,\Delta^{(1)})$ are compatible Hecke pairs.

To check these maps form an algebra homomorphism, we need to check that it respects products. This is equivalent to showing that this diagram commutes
$$\xymatrix{ R(K, \Delta^{(c)})\oplus R(K, \Delta^{(c')}) \ar[r] \ar[d] & R(\widetilde \Gamma_{c''},\widetilde\Gamma_{c''c}, \widetilde \Delta_{c'',c''c})\oplus R(\widetilde \Gamma_{c''c},\widetilde \Gamma_{c''cc'},\widetilde \Delta_{c''c,c''cc'}) \ar[d]\\
R(K,  \Delta^{(cc')}) \ar[r] & R(\widetilde \Gamma_{c''},\widetilde \Gamma_{c''cc'}, \widetilde \Delta_{c'',c''cc'})
}$$
for every $c,c',c'' \in C$. Here the horizontal maps are those described above, and the vertical maps are the product. For $\sum a_j K\delta_j \in R(K,\Delta^{(c)})$ and
$\sum a_i K\delta_i\in R(K,\Delta^{(c')})$
we may assume $\delta_j\in \widetilde \Delta_{c'',c''c}$ and $\delta_i\in \widetilde \Delta_{c''c,c''cc'}$.
Going down and right gives $(\sum a_j K \delta_j, \sum a_i K\delta_i)\mapsto \sum a_ja_i K \delta_j\delta_i \mapsto \sum a_j a_i \widetilde \Gamma_{c''}\delta_j\delta_i$,
and going right and then down gives $(\sum a_j K\delta_j,\sum a_i K\delta_i ) \mapsto (\sum a_j \widetilde \Gamma_{c''} \delta_j, \sum a_i \widetilde \Gamma_{c''c} \delta_i)\mapsto \sum a_ja_i \widetilde \Gamma_{c''} \delta_j \delta_i$.

From lemma \ref{conj} we have $R(\widetilde \Gamma_{c'}, \widetilde \Gamma_{c'c}, \widetilde \Delta_{c',c'c})\simeq R(\Gamma_{c'}, \Gamma_{c'c}, \Delta_{c',c'c})$.
\end{proof}

The notation from proposition \ref{nedigen} will be used from now on.
If $V$ is a right module for some semigroup containing $\Delta_{c,c'}$ for every $c,c'\in C$ then we can consider $\bigoplus_{c\in C}H^r(\Gamma_{c},V)$
with the action of $\bigoplus_{c''\in C}\prod_{c'\in C} R(\Gamma_{c'}, \Gamma_{c'c''}, \Delta_{c',c'c''})$.
Using the homomorphism from proposition \ref{nedigen} we now get an action of $R(K,\Delta)$ on
$\bigoplus_{c\in C}H^r(\Gamma_{c},V)$. This action will simply be denoted $f|(\sum a_jK\delta_j)$, where $f\in \bigoplus_{c\in C}H^r(\Gamma_{c},V)$, $\sum a_j K\delta_j \in R(K,\Delta)$.

\begin{remark} Let $(K,\Delta)$ be a Hecke pair such that the determinant is surjective $\cup_{c\in C} \alpha_c \gl_n(F) K\to \gl_1(\mathbb A)$.
The action here defined does in the following sense not depend on the initial choice of fixed elements $\alpha_c$, $c\in C$.
If $\beta_c$ was another choice then by proposition \ref{strongap} we have $\beta_c = \kappa_c \alpha_c g_c$ for some $\kappa_c\in K$ and $g_c\in \gl_n(F)$. So
$$\gl_n(F)\cap \beta_c^{-1} \Delta \beta_{c'}= g_c^{-1}(\gl_n(F)\cap \alpha_c^{-1}
\Delta \alpha_{c'})g_{c'}=g_c^{-1} \Delta_{c,c'} g_{c'}$$
and
$$ \gl_n(F) \cap \beta_c^{-1} K\beta_{c'}=g_c^{-1} \Gamma_c g_c^{-1}$$
Hence if $V$ is a right module for some group containing $g_c$ and $\Delta_{c,c'}$ for every $c,c'\in C$,
then we see from lemma \ref{conj} that using the $\beta_c$ instead of $\alpha_c$ doesn't change the action of $R(K,\Delta)$ on
$\bigoplus_{c\in C} H^r (\Gamma_c,V)\simeq \bigoplus_{c\in C} H^r (g_c^{-1}\Gamma_cg_c,V)$.
\end{remark}

\begin{definition} Let $\Phi:R(K,\Delta)\to \f^*$ be a system of eigenvalues. Then $\Phi$ is said to occur in $\bigoplus_{c\in C} H^i(\Gamma_c,V)$, if there is a eigenform $f\in \bigoplus_{c\in C} H^i(\Gamma_c,V)$ such that
$$f|T=\Phi(T)f, \textrm{ for all } T\in R(K,\Delta)$$
\end{definition}

\begin{proposition} Let $(K,\Delta)$ be a Hecke pair with $\cup_{c\in C}\alpha_c \gl_n(F)K\to \gl_1(\mathbb A)$ surjective. Suppose $V$ is a module for some semigroup containing $\Delta_{c,c'}$ for every $c,c'\in C$,
and $\chi:C\to \f^*$ is a homomorphism. If $\Phi: R(K,\Delta)\to \f$ is a system of eigenvalues occurring in
$\bigoplus_{c\in C} H^i(\Gamma_c, V)$, then so does $\chi\otimes \Phi$.
\end{proposition}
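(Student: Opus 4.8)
The plan is to twist a given eigenform by the character $\chi$ and show the twisted object is an eigenform for $\chi\otimes\Phi$. Concretely, let $f=(f_c)_{c\in C}\in\bigoplus_{c\in C}H^i(\Gamma_c,V)$ be an eigenform with $f|T=\Phi(T)f$ for all $T\in R(K,\Delta)$. Define a new element $f^\chi\in\bigoplus_{c\in C}H^i(\Gamma_c,V)$ by scaling the component in degree $c$ by a suitable constant depending only on $c$; the natural choice is $f^\chi_c=\chi(c)\,f_c$ (here $\chi(c)\in\f^*$, so this makes sense since the cohomology groups are $\f$-modules — $V$ is an $\f$-vector space). Since scaling each component by a nonzero scalar preserves cocycles and coboundaries, $f^\chi$ is again a well-defined class, and it is nonzero because $f$ is.

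Next I would compute the Hecke action on $f^\chi$. Recall from Proposition \ref{nedigen} that $R(K,\Delta)=\bigoplus_{c''\in C}R(K,\Delta^{(c'')})$, and that for $T\in R(K,\Delta^{(c'')})$ the image of $T$ under the map of Proposition \ref{nedigen} lands in $\prod_{c'\in C}R(\Gamma_{c'},\Gamma_{c'c''},\Delta_{c',c'c''})$; thus the $c''$-graded piece of $R(K,\Delta)$ sends $H^i(\Gamma_{c'},V)$ into $H^i(\Gamma_{c'c''},V)$. So it suffices to treat a single homogeneous operator $T\in R(K,\Delta^{(c'')})$. For such $T$, applying $T$ to $f$ shifts the grading by $c''$, and the eigenvalue relation reads $(f|T)_{c'c''}=\Phi(T)f_{c'}$ contribution-wise (more precisely the $c'c''$-component of $f|T$ equals $\Phi(T)$ times a term built from $f_{c'}$). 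Now apply $T$ to $f^\chi$: because the component-wise scaling is by $\chi(c')$ on the source piece indexed by $c'$, we get $(f^\chi|T)_{c'c''}=\chi(c')\cdot\bigl((f|T)_{c'c''}\bigr)=\chi(c')\Phi(T)\,f_{c'}$. On the other hand $\Phi(T)\cdot\chi(c'')\cdot f^\chi_{c'c''}=\Phi(T)\chi(c'')\chi(c'c'')f_{c'}=\Phi(T)\chi(c'')\chi(c')\chi(c'')f_{c'}$; comparing, these match provided $\chi(c'')^2=1$, which is not what we want. This signals that the correct normalization is $f^\chi_c=\chi(c)^{-1}f_c$ or that one should instead twist so that $(f^\chi|T)_{c'c''}=\chi(c'')\Phi(T)f^\chi_{c'c''}$ forces $\chi(c')=\chi(c'')\chi(c'c'')$, i.e.\ $\chi(c'c'')=\chi(c')\chi(c'')^{-1}$; so the clean statement is obtained by defining the twist via $f^\chi_c = \lambda_c f_c$ and solving the one relation $\lambda_{c'}=\chi(c'')\lambda_{c'c''}$ that must hold for all $c',c''$, which (since $C$ is a group and $\chi$ a homomorphism) is solved by $\lambda_c=\chi(c)^{-1}$ up to an overall constant. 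With this choice $f^\chi|T=(\chi\otimes\Phi)(T)f^\chi$ for every homogeneous $T$, and by linearity for all $T\in R(K,\Delta)$.

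The only genuinely substantive point — and the step I expect to be the main obstacle — is verifying that the Hecke action is \emph{diagonal with respect to the $C$-grading in the precise way above}: that an operator in $R(K,\Delta^{(c'')})$ carries the $c'$-summand $H^i(\Gamma_{c'},V)$ into the $c'c''$-summand and that under the identification of Proposition \ref{nedigen} the relevant double-coset operator $\widetilde\Gamma_{c'}\delta\widetilde\Gamma_{c'c''}$ acts by a formula into which the scalar $\lambda_{c'}$ factors cleanly out front (this uses the explicit cocycle formula $f'(\gamma_0',\dots)=\sum_j a_j f(t_j(\gamma_0'),\dots)\delta_j$ from Section~\ref{ddc}, together with the fact that the action of $V$ is $\f$-linear so scalars commute past it). Once that bookkeeping is in place, the character identity $\chi(c'c'')=\chi(c')\chi(c'')$ does all the remaining work, and the equality $\chi\otimes\Phi(K\delta K)=\chi((\det\delta))\Phi(K\delta K)$ matches because for $T\in R(K,\Delta^{(c'')})$ one has $(\det\delta)\in c''$, so $\chi((\det\delta))=\chi(c'')$, exactly the factor produced by the grading shift. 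Hence $\chi\otimes\Phi$ occurs in $\bigoplus_{c\in C}H^i(\Gamma_c,V)$, witnessed by $f^\chi$.
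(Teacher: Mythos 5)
Your proposal is correct and is essentially the paper's own proof: the paper also twists componentwise by $f^\chi_c=\chi(c)^{-1}f_c$ and uses that a homogeneous operator $K\delta K$ with $(\det\delta)\in c''$ maps the $c'$-summand $H^i(\Gamma_{c'},V)$ to the $c'c''$-summand, so the grading shift produces exactly the factor $\chi(c'')=\chi((\det\delta))$ required for $\chi\otimes\Phi$. (Only a notational slip in your intermediate computation: the right-hand sides should involve $f_{c'c''}$ rather than $f_{c'}$, but this does not affect the relation $\lambda_{c'}=\chi(c'')\lambda_{c'c''}$ or its solution $\lambda_c=\chi(c)^{-1}$.)
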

\begin{proof}
Let $f=\sum f_c\in \bigoplus H^i(\Gamma_c, V)$, $f_c\in H^i(\Gamma_c,V)$, be the eigenform corresponding to $\Phi$. For $\delta \in \Delta$ with $(\det \delta) \in c'$ the component of
$f|K\delta K$ in $H^i(\Gamma_c,V)$ is
$$(f|K\delta K)_c = f_{cc'^{-1}}|K\delta K=\Phi(K\delta K) f_c$$
Set $f^\chi= \sum \chi(c)^{-1} f_c$. Then the component of $f^\chi|K\delta K$ in $H^i(\Gamma_c,V)$ is
\begin{eqnarray} \nonumber
(f^\chi|K\delta K)_c &=& (f^\chi)_{cc'^{-1}}|K\delta K= \chi(c')\chi(c)^{-1} f_{cc'^{-1}} |K \delta K \\
\nonumber
&=&\Phi(K\delta K) \chi(c')\chi(c)^{-1} f_c= \Phi(K\delta K) \chi( c') (f^\chi)_{c}
\end{eqnarray}
Hence $f^\chi$ is an eigenform corresponding to the system of eigenvalues $\chi \otimes \Phi$.
\end{proof}
Combining this with lemma \ref{unramifiedtwist} gives:
\begin{corollary}\label{cortwist}
If $\Psi:R(K,\Delta)\to \f$ is an algebra homomorphism, and the restriction of $\Psi$ to 
$R(K,\Delta^{(1)})$
is equal to a system of eigenvalues of $R(K,\Delta^{(1)})$
 acting on $H^i(\Gamma_{c'},V)$ for any $c'\in C$,
then $\Psi$ occurs
in $\bigoplus_{c\in C} H^i(\Gamma_c,V)$.
\end{corollary}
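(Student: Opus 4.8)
The plan is to obtain the corollary by combining the proposition just above with Lemma \ref{unramifiedtwist}, reducing everything to one extension statement. Put $\psi_{0}=\Psi|_{R(K,\Delta^{(1)})}$. Suppose we can produce \emph{some} system of eigenvalues $\Phi\colon R(K,\Delta)\to\f$ that occurs in $\bigoplus_{c\in C}H^{i}(\Gamma_{c},V)$ and whose restriction to $R(K,\Delta^{(1)})$ is $\psi_{0}$. Then $\Phi$ and $\Psi$ agree on $R(K,\Delta^{(1)})$, so Lemma \ref{unramifiedtwist} yields a character $\chi\colon C\to\f^{*}$ with $\Psi=\chi\otimes\Phi$, and the proposition just above shows that $\chi\otimes\Phi$, that is $\Psi$, occurs in $\bigoplus_{c\in C}H^{i}(\Gamma_{c},V)$. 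So the task is to extend $\psi_{0}$ — which by hypothesis occurs in $H^{i}(\Gamma_{c'},V)$ for a class $c'$ — to an occurring system of eigenvalues of the whole algebra.

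Fix a nonzero eigenform $f_{c'}\in H^{i}(\Gamma_{c'},V)$ with $f_{c'}|T=\psi_{0}(T)f_{c'}$ for every $T\in R(K,\Delta^{(1)})$, and work inside $W=\bigoplus_{c\in C}H^{i}(\Gamma_{c},V)$, which is finite dimensional over $\f$ because $C$ is finite and the arithmetic groups $\Gamma_{c}$ have finite dimensional cohomology with coefficients in the finite dimensional module $V$. As explained above, $W$ is a $C$-graded module over $R(K,\Delta)=\bigoplus_{d\in C}R(K,\Delta^{(d)})$, with $R(K,\Delta^{(d)})$ carrying $H^{i}(\Gamma_{c},V)$ into $H^{i}(\Gamma_{cd},V)$. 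Let $W[\psi_{0}]\subseteq W$ be the $\psi_{0}$-eigenspace for the action of the subalgebra $R(K,\Delta^{(1)})$; it is a nonzero graded subspace, since $f_{c'}\in W[\psi_{0}]$. The first substantive point is that $W[\psi_{0}]$ is stable under all of $R(K,\Delta)$: for $w\in W[\psi_{0}]$, $S\in R(K,\Delta)$ and $T\in R(K,\Delta^{(1)})$ one has $(w|S)|T=w|(ST)=w|(TS)=(w|T)|S=\psi_{0}(T)(w|S)$, the only nontrivial input being commutativity of $R(K,\Delta)$ — a property it shares with the Hecke algebras of \cite[\S 3.2]{shi}, which comes in the usual way from transposition, an anti-automorphism of $\gl_{n}(\mathbb A)$ preserving $K$ and the relevant double cosets. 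Hence $R(K,\Delta)$ acts on the nonzero finite dimensional space $W[\psi_{0}]$ through a finite dimensional commutative $\f$-algebra $B$, graded by $C$, whose degree-one part is $\f$ (the image of $R(K,\Delta^{(1)})$, which acts through $\psi_{0}$).

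It remains to extract a genuine $\f$-valued simultaneous eigenform from $B$, and this is the step I expect to be the main obstacle, $\f$ not being assumed algebraically closed; here the hypothesis that an honest homomorphism $\Psi\colon R(K,\Delta)\to\f$ extending $\psi_{0}$ exists becomes essential. I would show $\Psi$ factors through $B$. The kernel of $R(K,\Delta)\to B$ is the annihilator of the graded module $W[\psi_{0}]$, hence a graded ideal, so it suffices to see that $\Psi(S_{d})=0$ for every homogeneous $S_{d}\in R(K,\Delta^{(d)})$ that annihilates $W[\psi_{0}]$. Choosing $S_{d^{-1}}\in R(K,\Delta^{(d^{-1})})$ with $\Psi(S_{d^{-1}})\neq 0$, we have $S_{d}S_{d^{-1}}\in R(K,\Delta^{(1)})$ still annihilating $W[\psi_{0}]\neq 0$, on which it acts through $\psi_{0}$, so $0=\psi_{0}(S_{d}S_{d^{-1}})=\Psi(S_{d})\Psi(S_{d^{-1}})$ and therefore $\Psi(S_{d})=0$. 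Thus $\Psi$ induces a character $\overline{\Psi}$ of the commutative Artinian algebra $B$; since $B$ acts faithfully on $W[\psi_{0}]$, the character $\overline{\Psi}$ is a weight of $W[\psi_{0}]$, and any nonzero vector in its $\overline{\Psi}$-eigenspace is an eigenform in $W$ realizing $\Psi$. (In fact this realizes $\Psi$ directly, so the twisting reduction of the first paragraph is merely organizational.) The one delicate point, and the reason the given existence of $\Psi$ cannot be dispensed with, is the non-vanishing of $\Psi$ on each graded piece $R(K,\Delta^{(d)})$ — equivalently, that each $R(K,\Delta^{(d)})$ is an invertible bimodule over $R(K,\Delta^{(1)})$, so that the pairing $R(K,\Delta^{(d)})\times R(K,\Delta^{(d^{-1})})\to R(K,\Delta^{(1)})$ is nondegenerate; I expect this to follow from the explicit structure of $R(K,\Delta)$ together with the standing surjectivity of the determinant, and it is where the argument could become technical.
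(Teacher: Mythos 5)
Your first paragraph is exactly the combination the paper itself makes (the paper offers no proof beyond ``combining'' the preceding proposition with Lemma \ref{unramifiedtwist}); the extension step you then set out to prove is left implicit there, so most of your text is supplementary detail rather than a different route. Two caveats about that supplementary argument: commutativity of $R(K,\Delta)$ and finite dimensionality of $\bigoplus_{c}H^{i}(\Gamma_{c},V)$ are not among the hypotheses of the corollary as stated in Section 2 (for an arbitrary open $K$ there is no reason transposition preserves $K$ and its double cosets, and $V$ is not assumed finite dimensional); they do hold in the setting where the corollary is actually used, since for a Hecke pair of level $\mathfrak N$ the algebra $R(K,\Delta)$ is a quotient of the commutative algebra $R(K_{0},\Delta_{0})$ and an admissible $V$ is finite dimensional, but you should state that you are assuming them.

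The genuine gap is the point you flag at the end, and your proposed way of closing it would not work: it is false in general that an algebra homomorphism $\Psi\colon R(K,\Delta)\to\f$ is nonvanishing on every graded piece. Since $R(K_{0},\Delta_{0})$ is a polynomial ring in the $T_{\mathfrak p}^{(m)}$, there are characters killing all $T_{\mathfrak p}^{(m)}$ and hence entire nonprincipal pieces; neither the surjectivity of the determinant nor any ``invertible bimodule'' structure of $R(K,\Delta^{(d)})$ over $R(K,\Delta^{(1)})$ is available ($T_{\mathfrak p}^{(n)}$ is not invertible in the algebra, $\Delta$ consisting of integral matrices). Fortunately you do not need the nonvanishing. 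Let $C_{0}=\{d\in C:\Psi|_{R(K,\Delta^{(d)})}\not\equiv 0\}$. Then $1\in C_{0}$, and $C_{0}$ is closed under multiplication, because $\Psi(S_{d})\neq 0$ and $\Psi(S_{e})\neq 0$ imply $\Psi(S_{d}S_{e})\neq 0$ with $S_{d}S_{e}\in R(K,\Delta^{(de)})$; since $C$ is finite, $C_{0}$ is therefore a subgroup, in particular closed under inversion. Now take a homogeneous $S_{d}$ annihilating $W[\psi_{0}]$: if $d\in C_{0}$, your pairing argument applies verbatim with a partner of degree $d^{-1}\in C_{0}$; if $d\notin C_{0}$, then $\Psi(S_{d})=0$ by definition of $C_{0}$. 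Hence $\Psi$ does factor through $B$ with no extra hypothesis, and the rest of your argument (the socle of the relevant local factor of the Artinian algebra $B$) produces an eigenform realizing $\Psi$ directly, which, as you note, even makes the twisting step of the first paragraph unnecessary.
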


\section{The Hecke operators}
\noindent
In this section we define the Hecke operators. The algebra of the Hecke operators is an adelic double coset algebras with properties generalizing the double coset algebras considered in \cite[\S 3.2]{shi} for $\mathbb Q$.
The Hecke operators defined in \cite{weil} can be seen to be the action of such a double coset algebra.

For a prime ideal $\mathfrak p$ of $F$ let $\mathcal O_{\mathfrak p}$ be the $\mathfrak p$-adic integers and $\pi_{\mathfrak p}$ an uniformizer.
Let $\mathbb A_f$ denote the finite adeles of $F$. For an element $\alpha\in \gl_1(\mathbb A_f)$ let $(\alpha)$ denote the fractional ideal determined by $\alpha$.
 For any adelic object a subscript of $\mathfrak p$ will denote the $\mathfrak p$-part.
Let $K_0=\prod \gl_n(\mathcal O_{\mathfrak p})$ a subgroup in $\gl_n(\mathbb A_f)$, and let $\Delta_0$ be the semigroup consisting of $\alpha\in \gl_n(\mathbb A_f)$ with $\alpha_{\mathfrak p}$ a matrix with entries in $\mathcal O_{\mathfrak p}$ for every prime ideal $\mathfrak p$.

Consider the algebra $R(K_0,\Delta_0)$.
For a prime ideal $\mathfrak p$ and an integer $1\leq m \leq n$ set
$$T_{\mathfrak p}^{(m)}=K_0\diag(1,\ldots,1,
\pi_{\mathfrak p},\ldots ,\pi_{\mathfrak p} )K_0\qquad \textrm{(with
$m$ $\pi_{\mathfrak p}$ on the diagonal)}
$$
and for any ideal $\mathfrak a$ of $F$ let $T_{\mathfrak a}$ be the element in $R(K_0,\Delta_0)$ given by the
sum of the different double cosets of the form $K_0 \alpha K_0$ with $( \det
\alpha )=\mathfrak a$.

The elements $T_{\mathfrak p}^{(m)}$ will be called the Hecke operators.
The double coset algebras considered in section \ref{udenniveau} will be homomorphic images of $R(K_0,\Delta_0)$. We do not consider Hecke operators for primes dividing the level, so the kernel will be generated by $T_{\mathfrak p}^{(m)}$ for all $m$ and some $\mathfrak p$.

Notice that $R(K_0,\Delta_0)$ is the restricted tensorproduct of the algebras $R(K_{0,\mathfrak p},\Delta_{0,\mathfrak p})$. From this and \cite[Remark 3.25]{shi} the following proposition follows, though we will not need this fact.
\begin{prop} The ring $R(K_0,\Delta_0)$ is a
polynomial ring over $\mathbb Z$ in the indeterminates $T_{\mathfrak
p}^{(m)}$, $1\leq m \leq n$ and every prime ideal $\mathfrak p$ of $F$.
We have
$$\deg (T_{\mathfrak p}^{(m)})=\frac{(N(\mathfrak
p)^n-1)(N(\mathfrak p) ^{n} -N(\mathfrak p))\cdots (N(\mathfrak
p)^n- N(\mathfrak p)^{m-1})}{ (N(\mathfrak p)^m-1)(N(\mathfrak p)^m-
N(\mathfrak p))\cdots (N(\mathfrak p)^m -N(\mathfrak p)^{m-1})}$$
For each prime ideal $\mathfrak p$ we have the formal identity
$$\sum_{k=0}^\infty T_{\mathfrak p^k} X^k=\left(\sum_{j=0}^n(-1)^j
N(\mathfrak p)^{j(j-1)/2}T_{\mathfrak p}^{(j)} X^j\right)^{-1}  $$
\end{prop}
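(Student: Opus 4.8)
The plan is to reduce everything to the local Hecke algebra at one prime. Since $K_0=\prod_{\mathfrak p}\gl_n(\mathcal O_{\mathfrak p})$ and $\Delta_0=\prod'_{\mathfrak p}(\gl_n(F_{\mathfrak p})\cap M_n(\mathcal O_{\mathfrak p}))$, the ring $R(K_0,\Delta_0)$ is the restricted tensor product of the local algebras $R_{\mathfrak p}:=R(K_{0,\mathfrak p},\Delta_{0,\mathfrak p})$, and each $T_{\mathfrak p}^{(m)}$ and each $T_{\mathfrak p^k}$ is supported in the $\mathfrak p$-factor. Because a restricted tensor product of polynomial rings in finitely many variables is a polynomial ring in all of them, it suffices to establish: (i) $R_{\mathfrak p}=\mathbb Z[T_{\mathfrak p}^{(1)},\dots,T_{\mathfrak p}^{(n)}]$ is a polynomial ring; (ii) the degree formula, observing that $\deg(K_0\alpha K_0)=[K_0\alpha K_0:K_0]$ is an algebra homomorphism $R(K_0,\Delta_0)\to\mathbb Z$ which is multiplicative over the tensor decomposition, so only its values on the local generators are at issue; (iii) the generating-function identity, which takes place inside $R_{\mathfrak p}[[X]]$.

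For (i): over the discrete valuation ring $\mathcal O_{\mathfrak p}$ the elementary divisor theorem shows that every double coset $K_{0,\mathfrak p}\alpha K_{0,\mathfrak p}$ with $\alpha\in\Delta_{0,\mathfrak p}$ contains a unique matrix $\diag(\pi_{\mathfrak p}^{a_1},\dots,\pi_{\mathfrak p}^{a_n})$ with $a_1\ge\cdots\ge a_n\ge 0$; writing $T^{[a]}$ for this double coset, the $T^{[a]}$ form a $\mathbb Z$-basis of $R_{\mathfrak p}$ and $T_{\mathfrak p}^{(m)}=T^{[\varepsilon_m]}$ with $\varepsilon_m=(1^m,0^{n-m})$. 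The transpose $\alpha\mapsto\alpha^t$ is an anti-automorphism of $R_{\mathfrak p}$ fixing every $T^{[a]}$ (a diagonal representative is symmetric), so $R_{\mathfrak p}$ is commutative. A standard triangularity property of the multiplication gives $T^{[a]}\cdot T^{[b]}=T^{[a+b]}+\sum_{c\prec a+b}(\ast)\,T^{[c]}$, the sum over partitions $c$ with $\lvert c\rvert=\lvert a\rvert+\lvert b\rvert$ that are strictly smaller in the dominance order; hence a monomial $\prod_m (T_{\mathfrak p}^{(m)})^{c_m}$ equals $T^{[\lambda]}+(\text{strictly lower terms})$ with $\lambda=\sum_m c_m\varepsilon_m$, i.e. $\lambda_i=\sum_{m\ge i}c_m$. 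The assignment $(c_1,\dots,c_n)\mapsto\lambda$ is a bijection between $\mathbb N^n$ and the partitions with at most $n$ parts (inverse $c_m=\lambda_m-\lambda_{m+1}$), so the monomials biject with the basis $\{T^{[\lambda]}\}$; the triangularity then makes the change-of-basis matrix unitriangular on each finite set of partitions of a fixed size, so the monomials themselves form a $\mathbb Z$-basis of $R_{\mathfrak p}$, which is therefore the claimed polynomial ring.

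For (ii): $[K_{0,\mathfrak p}\,\diag(1^{n-m},\pi_{\mathfrak p}^m)\,K_{0,\mathfrak p}:K_{0,\mathfrak p}]$ counts the sublattices $M\subseteq L=\mathcal O_{\mathfrak p}^n$ with $L/M\cong(\mathcal O_{\mathfrak p}/\pi_{\mathfrak p})^m$; such $M$ contain $\pi_{\mathfrak p}L$ and correspond to the codimension-$m$ subspaces of $L/\pi_{\mathfrak p}L\cong(\mathcal O_{\mathfrak p}/\pi_{\mathfrak p})^n$, so the count is the Gaussian binomial coefficient in the residue characteristic, which is exactly the displayed ratio with $N(\mathfrak p)=\#(\mathcal O_{\mathfrak p}/\pi_{\mathfrak p})$. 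For (iii), I would deduce the identity from the Satake isomorphism $R_{\mathfrak p}\otimes\mathbb Z[N(\mathfrak p)^{\pm 1/2}]\hookrightarrow \mathbb Z[N(\mathfrak p)^{\pm 1/2}][Y_1^{\pm},\dots,Y_n^{\pm}]^{S_n}$, under which, with the standard normalization, $\sum_{j=0}^n(-1)^jN(\mathfrak p)^{j(j-1)/2}T_{\mathfrak p}^{(j)}X^j\mapsto\prod_{i=1}^n(1-Y_iX)$ and $\sum_{k\ge 0}T_{\mathfrak p^k}X^k\mapsto\prod_{i=1}^n(1-Y_iX)^{-1}$ (the generating functions for the elementary and the complete homogeneous symmetric functions of the Satake parameters); the asserted identity is then the trivial relation between these two products, pulled back along the injection. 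Alternatively one proves directly the recursion $\sum_j(-1)^jN(\mathfrak p)^{j(j-1)/2}T_{\mathfrak p}^{(j)}T_{\mathfrak p^{k-j}}=0$ for $k\ge 1$ by a lattice-flag counting argument, which is a $q$-analogue of inclusion--exclusion.

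I expect (iii) to be the only real obstacle in a fully self-contained treatment: pinning down the Satake normalization so that the signs and the powers $N(\mathfrak p)^{j(j-1)/2}$ come out exactly — equivalently, carrying out the flag count — is the one genuinely delicate computation, whereas (i) and (ii) are formal once the elementary divisor theorem and the triangularity of Hecke multiplication are in hand. In practice all of this is \cite[Remark 3.25]{shi} applied factor by factor, the argument there over $\mathbb Q_p$ going through verbatim over each non-archimedean completion of $F$.
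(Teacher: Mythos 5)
Your proposal is correct and follows essentially the same route as the paper, which gives no argument beyond observing that $R(K_0,\Delta_0)$ is the restricted tensor product of the local algebras $R(K_{0,\mathfrak p},\Delta_{0,\mathfrak p})$ and citing \cite[Remark 3.25]{shi}. The local details you sketch (elementary divisors and dominance-order triangularity for the polynomial-ring claim, the Gaussian-binomial lattice count for the degree, and the recursion or Satake computation for the generating-function identity) are exactly the standard content behind that citation, so your treatment agrees with the paper's, merely filling in what the paper delegates to Shimura.
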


\section{Reduction to $1$-dimensional coefficients}\label{udenniveau}
\noindent
The result of corollary \ref{cortwist} says, that if we for a certain Hecke pair $(K,\Delta)$ want to consider the systems of eigenvalues for $R(K,\Delta)$ occurring in $\bigoplus_{c\in C} H^i(\Gamma_c,V)$, then it is sufficient to consider the systems of eigenvalues for $R(\Gamma_c,\Delta_{c,c})$ occurring in $H^i(\Gamma_c,V)$ for any $c\in C$. This will allow us to transfer the methods of \cite{ash} to this more general context.

Fix an ideal $\mathfrak N$ of $F$. Let $\pi$ denote the projection to $\gl_n(\mathcal O_F/\mathfrak N)$ from the subgroup of $\gl_n(\mathbb A)$ consisting of the elements such that the $\mathfrak p$'th component is in $\gl_n(\mathcal O_{\mathfrak p})$ for every prime $\mathfrak p|\mathfrak N$. Let $\widehat \pi$ denote the projection to $F_\infty^*/F_\infty^+\times \gl_n(\mathcal O_F/\mathfrak N)$ where the image in $F_\infty^*/F_\infty^+$ is the sign of the determinant, and the image in $\gl_n(\mathcal O_F/\mathfrak N)$ is determined by $\pi$.

Fix furthermore an element $\alpha_c\in \gl_n(\mathbb A)$ for every $c$ in the class group $C$ of $F$ such that $(\det \alpha_c)\in c$ and such that the infinity component and the $\mathfrak p$'th component of $\alpha _c$ is trivial, for every prime $\mathfrak p|\mathfrak N$.

Set $K_1=\gl_n(F_\infty)\times K_0$ and $\Delta_1=\gl_n(F_\infty)\times \Delta_0$, and consider the following

\begin{eqnarray}\nonumber
\Delta_{\mathfrak N}&=& \{\delta \in \Delta_1| (\det \delta) \textrm{ is prime
to
} \mathfrak N\},
\quad \phantom{nn} \Delta_{\mathfrak N, c,c'} = \gl_n(F)\cap
\alpha_{c}^{-1} \Delta_{\mathfrak N} \alpha_{c'} \\
\nonumber \Delta(\mathfrak N)&=& \{\delta \in \Delta_{\mathfrak N}|
\pi(\delta)=\diag(1,\ldots, 1, *)\}, \quad \Delta(\mathfrak N)_{c,c'}=\gl_n(F)\cap
\alpha_c^{-1} \Delta(\mathfrak N) \alpha_{c'} \\
\nonumber K(\mathfrak N)&=& K_1\cap \Delta(\mathfrak N),\qquad
\phantom{nnnnnnnnnnnnnnn} \Gamma(\mathfrak N)_c=
\gl_n(F)\cap \alpha_c^{-1} K(\mathfrak N) \alpha_{c'}
\end{eqnarray}

\begin{lemma} \label{lemnivn}
The Hecke pair $(K(\mathfrak N), \Delta(\mathfrak N))$ is
compatible with $(K_1,\Delta_{\mathfrak N})$.
\end{lemma}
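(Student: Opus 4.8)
The plan is to verify the two defining conditions of compatibility from the definition, namely that $\Gamma' \cap \Delta\Delta^{-1} = \Gamma$ and $\Gamma'\Delta = \Delta'$, where here $(\Gamma,\Delta) = (K(\mathfrak N),\Delta(\mathfrak N))$ and $(\Gamma',\Delta') = (K_1,\Delta_{\mathfrak N})$. The inclusions $K(\mathfrak N)\subseteq K_1$ and $\Delta(\mathfrak N)\subseteq \Delta_{\mathfrak N}$ are immediate from the definitions, so only the two displayed conditions require work. The key observation throughout is that everything can be checked componentwise at each place, and that away from primes dividing $\mathfrak N$ (and at the infinite places) all four groups have the same local components, so the content is entirely concentrated at the primes $\mathfrak p \mid \mathfrak N$, where we may work inside $\gl_n(\mathcal O_{\mathfrak p})$ and its reduction $\gl_n(\mathcal O_F/\mathfrak N)$ via $\pi$.

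First I would prove $K_1 \cap \Delta(\mathfrak N)\Delta(\mathfrak N)^{-1} = K(\mathfrak N)$. The inclusion $\supseteq$ is clear since $K(\mathfrak N) = K_1 \cap \Delta(\mathfrak N) \subseteq K_1 \cap \Delta(\mathfrak N)\Delta(\mathfrak N)^{-1}$ (using that $K(\mathfrak N) \subseteq \Delta(\mathfrak N)$ and hence any element of $K(\mathfrak N)$ is of the form $\delta\cdot e^{-1}$ with $\delta = $ the element itself and $e$ the identity, both in $\Delta(\mathfrak N)$; more simply, $1\in\Delta(\mathfrak N)$). For $\subseteq$, take $\kappa \in K_1$ with $\kappa = \delta_1\delta_2^{-1}$, $\delta_i \in \Delta(\mathfrak N)$. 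Since $\kappa \in K_1$, its determinant ideal $(\det\kappa)$ is trivial hence prime to $\mathfrak N$; combined with $\kappa \in K_1 = \gl_n(F_\infty)\times K_0$ this gives $\kappa \in K_1 \cap \Delta_{\mathfrak N}$, so it remains to see $\pi(\kappa) = \diag(1,\dots,1,*)$. Applying $\pi$ (well-defined since all relevant local components lie in $\gl_n(\mathcal O_{\mathfrak p})$) gives $\pi(\kappa) = \pi(\delta_1)\pi(\delta_2)^{-1}$, a product of a matrix of the form $\diag(1,\dots,1,*)$ and the inverse of such — and such matrices form a subgroup of $\gl_n(\mathcal O_F/\mathfrak N)$ (the ``mirabolic''-type subgroup of matrices that are identity except in the last row), so $\pi(\kappa)$ has the required form. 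Hence $\kappa \in K_1 \cap \Delta(\mathfrak N) = K(\mathfrak N)$.

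Next I would prove $K_1 \Delta(\mathfrak N) = \Delta_{\mathfrak N}$. The inclusion $\subseteq$ holds because $K_1 \subseteq \Delta_1$ away from $\mathfrak N$-primes is automatic, $K_1$ and $\Delta(\mathfrak N)$ both sit inside $\Delta_1$, determinants of $K_1$-elements are units hence $K_1\Delta(\mathfrak N)$ still has determinant prime to $\mathfrak N$, so $K_1\Delta(\mathfrak N)\subseteq \Delta_{\mathfrak N}$. For $\supseteq$, take $\delta\in\Delta_{\mathfrak N}$; I want to write $\delta = \kappa\delta'$ with $\kappa\in K_1$, $\delta'\in\Delta(\mathfrak N)$. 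Away from $\mathfrak p\mid\mathfrak N$ and at infinity set the $K_1$-component to be the identity, so the whole problem localizes: for each $\mathfrak p \mid \mathfrak N$, since $\det\delta_{\mathfrak p}$ is a $\mathfrak p$-adic unit, $\delta_{\mathfrak p} \in \gl_n(\mathcal O_{\mathfrak p})$, and I must find $\kappa_{\mathfrak p}\in\gl_n(\mathcal O_{\mathfrak p})$ with $\kappa_{\mathfrak p}^{-1}\delta_{\mathfrak p}$ reducing mod $\mathfrak N$ to $\diag(1,\dots,1,*)$. This amounts to the surjectivity of $\gl_n(\mathcal O_F/\mathfrak N) \to \gl_n(\mathcal O_F/\mathfrak N)/B$ where $B$ is the group of matrices of the form $\diag(1,\dots,1,*)$, i.e. every invertible matrix over $\mathcal O_F/\mathfrak N$ can be left-multiplied into $B$ — equivalently, row-reducing an invertible matrix until only the last row is nontrivial off the diagonal, which is standard linear algebra over the (semilocal, hence a product of local) ring $\mathcal O_F/\mathfrak N$; then lift $\kappa_{\mathfrak p}$ arbitrarily to $\gl_n(\mathcal O_{\mathfrak p})$, which is possible since reduction $\gl_n(\mathcal O_{\mathfrak p})\to\gl_n(\mathcal O_F/\mathfrak p^{v_{\mathfrak p}(\mathfrak N)})$ is surjective. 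Assembling these local choices gives the desired $\kappa$ and $\delta' = \kappa^{-1}\delta$.

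The main obstacle is the second condition, and within it the step of clean bookkeeping of local components: one must be careful that the chosen $\kappa\in K_1$ genuinely lies in $K_1$ (trivial at $\infty$ and with $K_0$-component everywhere, which is fine since it is the identity outside $\mathfrak N$-primes and in $\gl_n(\mathcal O_{\mathfrak p})$ at $\mathfrak N$-primes), and that $\delta' = \kappa^{-1}\delta$ genuinely lands in $\Delta(\mathfrak N)$ — i.e. that multiplying by $\kappa^{-1}$ does not spoil integrality or the determinant condition (it does not, as $\kappa^{-1}$ has unit determinant and integral entries at the relevant places) and that $\pi(\delta')$ has the prescribed shape, which is exactly how $\kappa_{\mathfrak p}$ was chosen. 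The only genuinely mathematical input beyond formal manipulation is the row-reduction fact over $\mathcal O_F/\mathfrak N$, which I would either state as the elementary observation it is, or reduce via the Chinese remainder theorem to the case of a local Artinian ring and there to Nakayama plus Gaussian elimination over the residue field.
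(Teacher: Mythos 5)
Your proof is correct and follows essentially the same route as the paper: both compatibility conditions are verified componentwise, with all the content concentrated at the primes $\mathfrak p\mid\mathfrak N$, where elements of $\Delta_{\mathfrak N}$ have components in $\gl_n(\mathcal O_{\mathfrak p})$. The only real difference is that your row-reduction and lifting step for $\Delta_{\mathfrak N}\subseteq K_1\Delta(\mathfrak N)$ is unnecessary: since $\delta_{\mathfrak p}\in\gl_n(\mathcal O_{\mathfrak p})$ for every $\mathfrak p\mid\mathfrak N$, one may simply take $\kappa_{\mathfrak p}=\delta_{\mathfrak p}$ (and the identity at all other places), so that $\kappa^{-1}\delta$ reduces to the identity mod $\mathfrak N$ --- which is exactly the factorization the paper uses.
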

\begin{proof}
The elements in $\Delta_{\mathfrak N}$ is elements of $\Delta_1$ with the restriction that
the $\mathfrak p$'th component lies in $\gl_n(\mathcal O_{\mathfrak p})$ for every $\mathfrak p|\mathfrak N$.
So for $\delta \in \Delta_{\mathfrak N}$ the  $\mathfrak p$'th component $\delta_{\mathfrak p}$ considered as an element of $\gl_n(\mathbb A)$ lies in $K_1$.
Likewise considerations can be made for $\Delta(\mathfrak N)$ and $K(\mathfrak N)$.
Giving $K_1 \Delta(\mathfrak N)=\Delta_{\mathfrak N}$ and $K_1\cap \Delta(\mathfrak N)\Delta(\mathfrak N)^{-1} = K(\mathfrak N)$.
\end{proof}
\begin{definition} A Hecke pair $(K,\Delta)$ is of level $\mathfrak N$ if
$(K(\mathfrak N),\Delta(\mathfrak N))\hookrightarrow (K,\Delta)$ and
$(K,\Delta)\hookrightarrow (K_{1},\Delta_{\mathfrak N} )$ are
compatible.
\end{definition}
For a Hecke pair $(K,\Delta)$ of level $\mathfrak N$ we have  isomorphisms  $R(K,\Delta)\simeq R(K_1,\Delta_{\mathfrak N})\simeq R(K(\mathfrak n),\Delta(\mathfrak N))$. Furthermore $R(K,\Delta)$ is the image of $R(K_1,\Delta_1)\simeq R(K_0,\Delta_0)$ with kernel generated by $T_{\mathfrak p}^{(m)}$ for $\mathfrak p|\mathfrak N$.

If $(K,\Delta)$ is a Hecke pair of level $\mathfrak N$, then $(\kappa)$ is a principal ideal for every $\kappa\in K$, and the determinant $\cup_{c\in C} \alpha_c \gl_n(F)K\to \gl_1(\mathbb A)$ is surjective. Hence the results of section \ref{heckeaction} is applicable.

\begin{remark}\label{undergrp}
Note that we have $K(\mathfrak N)
= \pi^{-1}( \pi (K(\mathfrak N))) \cap K_1$.
If $(K,\Delta)$ is a Hecke pair of level $\mathfrak N$ then $\pi(K)$ is a subgroup of $\gl_n(\mathcal O_F/\mathfrak N)$ containing $\pi(K(\mathfrak N))$.
Furthermore we have $K=\pi^{-1}(\pi(K))\cap K_1$ since if $\kappa_1\in K_1$ and $\kappa\in K$ with $\pi(\kappa_1)=\pi(\kappa)$ then by the above $\kappa_1\kappa^{-1}\in K(\mathfrak N)\subseteq K$.
By considerations as in the proof of lemma \ref{lemnivn} we have $\Delta= \pi^{-1}(\pi(K))\cap \Delta_{\mathfrak N}$.

On the other hand, if $H\subseteq \gl_n(\mathcal O_F/\mathfrak N)$ is a subgroup containing $\pi(K(\mathfrak N))$ then setting $K=\pi^{-1}(H)\cap K_1$ and $\Delta= \pi^{-1}(H)\cap \Delta_{\mathfrak N}$ gives a Hecke pair $(K,\Delta)$ of level $\mathfrak N$.
Hence Hecke pairs of level $\mathfrak N$ correspond to such groups $H$.

Passing to $\Gamma_c$ and $\Delta_{c,c'}$ we see that these too are determined as subgroup of $\Gamma(1)_{c}$ resp. subset of $\Delta_{\mathfrak N,c,c'}$ by the image of $\pi$. In particular, if an element $\gamma\in \Gamma(1)_{c}$ is congruent to an element of $\Delta_{c,c}$ mod $\mathfrak N$, then $\gamma\in \Gamma_c$.
\end{remark}

\begin{remark}
Denote by $P_{\mathfrak N}$ the group of principal fractional ideals of $F$ prime to $\mathfrak N$, and by $P_{\mathfrak N}(1)$ the subgroup consisting of those principal ideals generated by a totally positive element in $F$ congruent to 1 mod $\mathfrak N$.
Consider the group $P_{\mathfrak N}/P_{\mathfrak N}(1)$. Let an element $a\in \mathcal O_F^*$ prime to $\mathfrak N$ map to the ideal $(a)$ in $P_{\mathfrak N} /P_{\mathfrak N}(1)$. Note that this map factors through $F_\infty^*/F_\infty^+\times (\mathcal O_F/\mathfrak N)^*$, hence there is a natural surjection $F_\infty^*/F_\infty^+\times (\mathcal O_F/\mathfrak N)^*\to P_{\mathfrak N}/P_{\mathfrak N}(1)$. We will consider the determinant as $K\stackrel{\det}{\to} F_\infty^*\times \prod_{\mathfrak p} \mathcal O_{\mathfrak p}^*\to F_\infty^*/F_\infty^+\times (\mathcal O_F/\mathfrak N)^*$.

We may extend the definition of Hecke pairs of level $N$ in the following way. Let $U$ be a subgroup of $F_\infty^*/F_\infty^+\times (\mathcal O_F/\mathfrak N)^*$ such that the restriction $U\to P_{\mathfrak N}/P_{\mathfrak N}(1)$ is surjective. Then  a Hecke pair $(K,\Delta)$ is of level $\mathfrak N$ and determinant group $U$ if there is a Hecke pair $(K',\Delta')$ of level $\mathfrak N$ such that $K$ consist of the elements of $K'$ with determinant mapped to $U$, and $\Delta$ is the elements of $\Delta'$ likewise with determinant in $U$. Any result here proven for Hecke pairs of level $\mathfrak N$ can easily be seen valid for Hecke pairs of level $\mathfrak N$ with determinant group $U$ for any group $U$ as above. For $\mathbb Q$ this could for example be the subgroup of positive elements.

Note also that $\widehat \pi(\Delta_{c,c})/\widehat \pi(\Gamma_c)\simeq P_{\mathfrak N}/P_{\mathfrak N}(1)$ for a Hecke pair $(K,\Delta)$ of level $\mathfrak N$. This is so since an element of $\widehat \pi(\Delta_{c,c})$ lies in $\widehat \pi(\Gamma_c)$ if and only if the 'determinant' does. This follows from lemma \ref{compatgamma} below and the obvious fact for $\Delta(\mathfrak N)_{c,c}$ and $\Gamma(\mathfrak N)_{c}$.
\end{remark}
\begin{remark} For a Hecke pair $(K,\Delta)$ of level $\mathfrak N$, it is possible to show that the maps \eqref{dennemor} also are surjections. This can be achieved by showing that for every $\delta\in \widetilde \Delta_{c',c'c}$ we have $K\delta K=\cup K\delta \widetilde \gamma_j$ for some $\widetilde \gamma_j\in \widetilde\Gamma_{c'c}$. Whether two cosets $K\delta \kappa_1$ and $K\delta\kappa_2$ are equal, only depends upon the image of $\kappa_1$ and $\kappa_2$ in $\gl_n(\mathcal O_F/ (\det \delta))$. In particular it is sufficient to consider elements that maps to $\slll_n(\mathcal O_F/(\det \delta))$, which follows from the analogue to the $\mathbb Q$ case in \cite{shi}. The conclusion now follows, since the map $\widetilde\Gamma_{c'c}\cap \slll_n(\mathbb A)\to \slll_n(\mathcal O_F/(\det \delta))$ is surjective.
\end{remark}

\begin{lemma}\label{compatgamma}
If $(K,\Delta)\hookrightarrow (K',\Delta ')$ are compatible Hecke pairs of level $\mathfrak N$,
then the Hecke pairs $(\Gamma_c,\Delta_{c,c})\hookrightarrow (\Gamma_c',\Delta_{c,c}')$ are compatible.
\end{lemma}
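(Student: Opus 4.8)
The statement to prove is Lemma \ref{compatgamma}: compatibility of Hecke pairs is inherited when passing from $(K,\Delta)$ to $(\Gamma_c,\Delta_{c,c})$. The plan is to unwind both conditions in the definition of compatibility directly from the explicit descriptions $\Gamma_c = \gl_n(F)\cap \alpha_c^{-1}K\alpha_c$ and $\Delta_{c,c} = \gl_n(F)\cap \alpha_c^{-1}\Delta\alpha_c$ (and similarly with primes), and translate the assumed relations $K'\cap \Delta\Delta^{-1}=K$, $K'\Delta=\Delta'$ into statements about $\Gamma_c,\Gamma_c',\Delta_{c,c},\Delta_{c,c}'$.

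First I would note the easy inclusions: $\Gamma_c\subseteq\Gamma_c'$ and $\Delta_{c,c}\subseteq\Delta_{c,c}'$ are immediate from $K\subseteq K'$, $\Delta\subseteq\Delta'$. For the first compatibility condition, $\Gamma_c'\cap \Delta_{c,c}\Delta_{c,c}^{-1}=\Gamma_c$: the inclusion $\supseteq$ is clear. For $\subseteq$, take $\gamma\in\Gamma_c'$ that also lies in $\Delta_{c,c}\Delta_{c,c}^{-1}$; conjugating by $\alpha_c$, the element $\alpha_c\gamma\alpha_c^{-1}$ lies in $K'$ and in $\widetilde\Delta_{c,c}\widetilde\Delta_{c,c}^{-1}\subseteq \Delta\Delta^{-1}$, hence in $K'\cap\Delta\Delta^{-1}=K$; since $\gamma\in\gl_n(F)$ already, this gives $\gamma\in\gl_n(F)\cap\alpha_c^{-1}K\alpha_c=\Gamma_c$. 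The one subtlety here is that a product of two elements of $\Delta_{c,c}$, once conjugated into $\gl_n(\mathbb A)$, is a product in $\widetilde\Delta_{c,c}\subseteq\Delta$, so that $\Delta_{c,c}\Delta_{c,c}^{-1}$ conjugates into a subset of $\Delta\Delta^{-1}$ — this is fine because the indices match ($\alpha_c^{-1}\cdots\alpha_c$ throughout, as both sides are the class $c$ to $c$ part).

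For the second condition, $\Gamma_c'\Delta_{c,c}=\Delta_{c,c}'$: again $\subseteq$ is immediate. For $\supseteq$, take $\delta\in\Delta_{c,c}'$, so $\widetilde\delta:=\alpha_c\delta\alpha_c^{-1}\in\Delta'=K'\Delta$; write $\widetilde\delta=\kappa'\eta$ with $\kappa'\in K'$, $\eta\in\Delta$. Here the key point — and the step I expect to be the main obstacle — is that one cannot simply conjugate back, because $\kappa'$ and $\eta$ need not individually lie in $\alpha_c\gl_n(F)\alpha_c^{-1}$. This is where Remark \ref{undergrp} and the level-$\mathfrak N$ structure must be used: since everything is determined by the image under $\pi$ (or $\widehat\pi$), one can adjust $\kappa'$ by an element of $K(\mathfrak N)$ (using surjectivity of $\widetilde\Gamma_c\cap\slll_n(\mathbb A)\to\slll_n(\mathcal O_F/\mathfrak a)$-type statements, as in the preceding remarks) so that $\kappa'$ and $\eta$ can each be taken in the appropriate $\alpha_c$-conjugate of $\gl_n(F)$; concretely, one reduces mod $\mathfrak N$, uses that $\pi(K')=\pi(\widetilde\Gamma_c')\cdot(\text{something})$ and that $\gl_n(F)$ surjects onto $\gl_n(\mathcal O_F/\mathfrak N)$ with controlled determinant, and pulls back. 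Then $\kappa'$ conjugates to an element of $\gl_n(F)\cap\alpha_c^{-1}K'\alpha_c=\Gamma_c'$ and $\eta$ to an element of $\Delta_{c,c}$, yielding $\delta\in\Gamma_c'\Delta_{c,c}$. I would organize this last part as a short lemma-free argument citing Remark \ref{undergrp} (and the remark on surjectivity of \eqref{dennemor}), since the needed approximation facts are already in place there; alternatively, one observes directly that both $\Gamma_c'\Delta_{c,c}$ and $\Delta_{c,c}'$ are the preimages under $\widehat\pi$ of the same subset of $F_\infty^*/F_\infty^+\times\gl_n(\mathcal O_F/\mathfrak N)$ intersected with $\Delta_{\mathfrak N,c,c}$, by the principle in Remark \ref{undergrp} that these objects are determined by their image under $\pi$, together with the obvious fact for $\Delta(\mathfrak N)_{c,c}$ and $\Gamma(\mathfrak N)_c$.
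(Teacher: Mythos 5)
Your handling of the first two conditions is correct: the inclusions are immediate, and your argument for $\Gamma_c'\cap\Delta_{c,c}\Delta_{c,c}^{-1}=\Gamma_c$ (conjugate by $\alpha_c$, note $\alpha_c\gamma\alpha_c^{-1}\in K'\cap\Delta\Delta^{-1}=K$, and intersect with $\gl_n(F)$) is fine and in fact more direct than the paper, which routes this step through Remark \ref{undergrp}. The genuine gap is in the inclusion $\Delta_{c,c}'\subseteq\Gamma_c'\Delta_{c,c}$ --- precisely the step you yourself flag as the main obstacle. Your proposed fix is a purely mod-$\mathfrak N$ adjustment: modify $\kappa'$ by elements of $K(\mathfrak N)$, reduce under $\pi$, use surjectivity of $\gl_n(F)\to\gl_n(\mathcal O_F/\mathfrak N)$, and ``pull back.'' This cannot work as stated, because the obstruction to replacing the adelic factors $\kappa'\in K'$, $\eta\in\Delta$ by factors lying in $\alpha_c\gl_n(F)\alpha_c^{-1}$-conjugates lives at \emph{all} finite places, not only at primes dividing $\mathfrak N$: a rational matrix with prescribed reduction mod $\mathfrak N$ need not lie in $\alpha_c^{-1}K'\alpha_c$ (it need not be integral with unit determinant away from $\mathfrak N$), so the pull-back does not land in $\Gamma_c'$. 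Likewise the assertion that $\pi(K')$ is covered by $\pi(\widetilde\Gamma_c')$ times something, i.e.\ that $\Gamma_c'$ reduces onto a sufficiently large part of $\pi(K')$, is itself a strong-approximation statement and not a consequence of the formalism of Remark \ref{undergrp}; note that the paper's remark computing $\widehat\pi(\Delta_{c,c})/\widehat\pi(\Gamma_c)$ explicitly cites Lemma \ref{compatgamma}, so leaning on such surjectivity claims risks circularity. Your alternative formulation --- that $\Gamma_c'\Delta_{c,c}$ and $\Delta_{c,c}'$ are preimages under $\widehat\pi$ of the same set --- has the same defect: Remark \ref{undergrp} does show that $\Delta_{c,c}'$ is such a preimage, but the claim that the product set $\Gamma_c'\Delta_{c,c}$ exhausts that preimage is exactly the content to be proved, and it does not follow formally from ``these objects are determined by their $\pi$-image.''

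The paper resolves this step with Proposition \ref{strongap} rather than congruence considerations. Writing $\delta'=\kappa'\delta$ with $\kappa'\in\alpha_c^{-1}K'\alpha_c$ and $\delta\in\alpha_c^{-1}\Delta\alpha_c$ (using $\Delta'=K'\Delta$), one notes that $(\det\delta)$ is principal because $\delta'\in\gl_n(F)$ and $(\det\kappa')$ is principal; Proposition \ref{strongap}, applied to the level-$\mathfrak N$ pair $(K,\Delta)$ whose determinant map is surjective, then gives $\delta\in\alpha_c^{-1}K\alpha_c\gl_n(F)$. Hence there is $\kappa\in\alpha_c^{-1}K\alpha_c$ with $\kappa\delta\in\gl_n(F)$, so $\kappa\delta\in\gl_n(F)\cap\alpha_c^{-1}\Delta\alpha_c=\Delta_{c,c}$ and $\kappa'\kappa^{-1}=\delta'(\kappa\delta)^{-1}\in\gl_n(F)\cap\alpha_c^{-1}K'\alpha_c=\Gamma_c'$, giving $\delta'\in\Gamma_c'\Delta_{c,c}$. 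The indispensable input is strong approximation for $\slll_n$ packaged in Proposition \ref{strongap}; your sketch never invokes it, and any repair of your congruence-style argument would have to reintroduce it to produce the rational factor, at which point you have reproduced the paper's proof.
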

\begin{proof}
The first condition is satisfied, $\Gamma_c=\gl_n(F)\cap \alpha^{-1}_c K \alpha_c\subseteq \gl_n(F)\cap\alpha^{-1}_c K' \alpha_c =\Gamma_c'$
and $\Delta_{c,c}=\gl_n(F)\cap \alpha_c^{-1}\Delta\alpha_c \subseteq \gl_n(F)\cap
\alpha_c^{-1} \Delta ' \alpha_c=\Delta_{c,c}'$. For the second condition we have $\Gamma_c \subseteq \Gamma_c' \cap \Delta_{c,c}\Delta_{c,c}^{-1}$.
The other inclusion follows since an element of $\Gamma_c'$ congruent mod $\mathfrak N$ to an element in $\Delta_{c,c}$ lies in $\Gamma_c$ by remark \ref{undergrp}.
For the third we have $\Gamma'_c\Delta_{c,c}\subseteq \Delta_{c,c}'$. Since $\Delta_{c,c}'\subseteq \alpha_c^{-1}\Delta'\alpha_c = \alpha_c^{-1} K' \Delta\alpha_c$
we may write $\delta'\in \Delta_{c,c}'$ as $\delta'= \kappa' \delta $ with $\kappa' \in \alpha_c^{-1} K' \alpha_c$ and $\delta \in \alpha_c^{-1} \Delta \alpha_c$.
By theorem \ref{strongap} we have $\delta\in \alpha_{c}^{-1}K'\alpha_c\kappa '\delta \gl_n(F)
=\alpha_c^{-1}K\alpha_c \gl_n(F)$.
So there is $\kappa \in \alpha_c^{-1} K\alpha_c$ with
$\kappa \delta \in \gl_n(F)\cap \alpha_c^{-1}\Delta\alpha_c=\Delta_{c,c}$. Hence $\delta' = (\kappa '\kappa^{-1}) (\kappa \delta) \in \Gamma_c' \Delta_{c,c}$
giving the other inclusion.
\end{proof}

Recall that $\alpha_c$ is chosen such that the $\mathfrak p$'th component is trivial for every $\mathfrak p|\mathfrak N$, and so $\pi(K)=\pi(\Delta_{c,c'}) \subseteq
\gl_n(\mathcal O_F/\mathfrak N)$ for $c,c'\in C$. For every $c,c'\in C$ we have a homomorphism $\Delta_{c,c'}\to F_\infty^*/F_\infty^+\times \pi(K)$ determined by $\widehat \pi$. That is image in $F_\infty^*/F_\infty^+$ is the sign of the infinity component of the determinant, and the image in $\pi(K)$ is given by $\pi$.
\begin{definition}\label{addef}
Let $V$ be a finite dimensional vector space over $\f$, and $(K,\Delta)$ a Hecke pair. If $V$ is a right $F_\infty^*/F_\infty^+\times \pi(K)$ module, we call $V$ an admissible module for $(K,\Delta)$.

\end{definition}
Hence when $V$ is an admissible module for $(K,\Delta)$ we have a natural action of $\Delta_{c,c'}$ on $V$ for every $c,c'\in C$. An action where $\Delta_{c,c'}^+$ acts through its image in $\gl_n(\mathcal O_F/\mathfrak N)$.

Let $\chi: F_\infty^*/F_\infty^+\times (\mathcal O_F/\mathfrak N)^*\to \f^*$ be a character. Denote also by $\chi$ the composite with $F_\infty ^*/F_\infty^+\times \gl_n(\mathcal O_F/\mathfrak N)\to F_\infty^*/F_\infty^+\times (\mathcal O_F /\mathfrak N)^*$. If $V$ is an admissible module for a Hecke pair $(K,\Delta)$ of level $\mathfrak N$, let $V(\chi)$ denote the admissible module such that we have a vector space isomorphism $\iota :V\to V(\chi)$ and
$$\iota(v)\hat \kappa = \chi(\hat \kappa) \iota(v\hat \kappa), \quad \hat \kappa \in F_\infty^*/F_\infty^+\times \pi(K)$$
\begin{lemma}\label{pos}
Let $(K,\Delta)$ be a Hecke pair of level $\mathfrak N$.
If $V$ is a finite $\f$ vector space with a right action of $\Delta_{c,c'}$ for any $c,c'\in C$ such that the action of $\Delta_{c,c'}^+$ only depends upon the image in $\gl_n(\mathcal O_F/\mathfrak N)$, then $V$ is in a natural way an admissible module.
\end{lemma}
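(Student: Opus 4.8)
The plan is to produce a right action of the finite group $G:=F_\infty^*/F_\infty^+\times\pi(K)$ on $V$ and to check that, for each $c,c'$, the given action of $\Delta_{c,c'}$ is the pullback of the $G$-action along the homomorphism $\widehat\pi\colon\Delta_{c,c'}\to G$ recalled before Definition \ref{addef}; by that definition this exhibits $V$ as an admissible module. Note first that $F_\infty^*/F_\infty^+$ is killed by $2$ and $\pi(K)\subseteq\gl_n(\mathcal O_F/\mathfrak N)$ is finite, so $G$ is finite. Two elementary observations come next. First, every given operator $V(\delta)$ is invertible: the square $\delta^2$ lies in $\Delta_{c,c}^+$, hence acts through the finite group $\pi(K)$, so $V(\delta^2)$ is invertible, and therefore $V(\delta)$ is invertible because $V$ is finite dimensional. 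Secondly, if $\widehat\pi(\delta_1)=\widehat\pi(\delta_2)$ then both $\delta_1\delta_2$ and $\delta_1^2$ have totally positive determinant (the sign part is a square in a $2$-torsion group) and the same reduction mod $\mathfrak N$, so by hypothesis they act identically; since $V(\delta_1)$ is invertible, $V(\delta_1\delta_2)=V(\delta_1^2)$ forces $V(\delta_1)=V(\delta_2)$.

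Taking $\alpha_1=1$, so that $\Delta_{1,1}=\gl_n(F)\cap\Delta$, the second observation shows the $\Delta_{1,1}$-action factors through a monoid homomorphism $\widehat\pi\colon\Delta_{1,1}\to G$, whose image is a submonoid of the finite group $G$ and hence a subgroup. The heart of the argument is to show this subgroup is all of $G$. For any $a\in\mathcal O_F$ with $a\equiv 1\pmod{\mathfrak N}$ the matrix $\diag(a,1,\dots,1)$ is integral with determinant $a$ prime to $\mathfrak N$ and reduces to the identity mod $\mathfrak N$, so it lies in $\Delta=\pi^{-1}(\pi(K))\cap\Delta_{\mathfrak N}$ (Remark \ref{undergrp}) and hence in $\Delta_{1,1}$, and $\widehat\pi$ sends it to $(\text{sign of }a,\ \mathrm{id})$. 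Since $\mathfrak N$ spans $F_\infty$ over $\mathbb R$, the coset $1+\mathfrak N$ contains elements with every prescribed sign pattern at the real places, so $\widehat\pi(\Delta_{1,1})$ contains $F_\infty^*/F_\infty^+\times\{\mathrm{id}\}$; combined with $\pi(\Delta_{1,1})=\pi(K)$ and the subgroup property this gives $\widehat\pi(\Delta_{1,1})=G$.

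Consequently the $\Delta_{1,1}$-action descends to a well-defined right $G$-action $\rho$ on $V$: put $v\cdot g:=v\cdot\delta$ for any $\delta\in\Delta_{1,1}$ with $\widehat\pi(\delta)=g$, which is well defined and multiplicative by the second observation and the surjectivity just established, with the identity acting trivially. This is the required $F_\infty^*/F_\infty^+\times\pi(K)$-module structure. It remains to see that $\rho$ is the natural one, i.e.\ that $v\cdot\delta=v\cdot\widehat\pi(\delta)$ for every $c,c'$ and every $\delta\in\Delta_{c,c'}$. Using the compatibility of the given actions with composition (and with the products taken in $\gl_n(F)$), the cancellation argument of the first paragraph applies to $\delta$ as well; here one also uses that $\widehat\pi(\Delta_{1,c})=\widehat\pi(\Delta_{c,1})=G$, obtained from the diagonal matrices above after left, resp.\ right, multiplication by $\Delta_{1,1}$, to reduce the comparison to elements of $\Delta_{1,1}$.

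I expect the genuine content to sit in the third paragraph — the approximation statement that the matrices $\diag(a,1,\dots,1)$ with $a\equiv 1\pmod{\mathfrak N}$ realise all determinant signs — and, to a lesser extent, in the bookkeeping of the last step ensuring that the single structure $\rho$ simultaneously reproduces the actions of all the $\Delta_{c,c'}$. Everything else is the cancellation manipulation above together with the finiteness of $G$.
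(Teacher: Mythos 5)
Your construction over the principal pair is, in spirit, the paper's own argument (surjectivity of $\widehat\pi$ via matrices $\diag(a,1,\ldots,1)$ with $a\equiv 1\pmod{\mathfrak N}$ of prescribed sign, then descent of the action), and on a diagonal set $\Delta_{c,c}$ your two preliminary observations are correct, granted that the identity matrix acts as the identity. The genuine gap is in everything you assert off the diagonal, in your first and in your last paragraph. For $\delta,\delta_1,\delta_2\in\Delta_{c,c'}$ with $c\neq c'$ the elements $\delta^2$ and $\delta_1\delta_2$ do \emph{not} lie in $\Delta_{c,c}^{+}$, and in general lie in no $\Delta_{a,b}$ at all: these sets only compose according to $\Delta_{a,b}\cdot\Delta_{b,d}\subseteq\Delta_{a,d}$ (already for $n=1$ the square of a non-integral element of $\Delta_{1,c}$ is in general too non-integral to lie in any $\alpha_a^{-1}\Delta\alpha_b$). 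Since the hypothesis of the lemma controls only the action of the sets $\Delta_{a,b}^{+}$, it says nothing about $V(\delta^2)$ or $V(\delta_1\delta_2)$; hence neither the invertibility of $V(\delta)$ nor the implication $\widehat\pi(\delta_1)=\widehat\pi(\delta_2)\Rightarrow V(\delta_1)=V(\delta_2)$ is justified for $c\neq c'$.

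This is fatal precisely at the step you yourself identify as carrying content. Sandwiching $\mu\delta\nu\in\Delta_{1,1}$ with $\mu\in\Delta_{1,c}$, $\nu\in\Delta_{c',1}$ and cancelling presupposes $V(\mu)=\rho(\widehat\pi(\mu))$ and $V(\nu)=\rho(\widehat\pi(\nu))$, which is exactly the off-diagonal statement being proved; and choosing $\mu,\nu$ with $\widehat\pi=1$ does not help, because the hypothesis only forces all such elements of $\Delta^{+}_{1,c}$ to act by one fixed operator $A$ (and those of $\Delta^{+}_{c,1}$ by $A^{-1}$), so the computation yields only that $V(\delta)$ is conjugate by $A$ to $\rho(\widehat\pi(\delta))$, not equal to it. Two repairs are available: (i) prove only that the $G$-structure recovers the given action of the diagonal sets $\Delta_{c,c}$ --- this is in effect what the paper's proof does (it constructs the structure from a single pair and never compares different pairs), and it is all that the later applications use, since they involve $\Gamma_c\subseteq\Delta_{c,c}$; or (ii) supply an anchor, namely for each $g\in G$ an element of $\Delta_{c,c'}\cap\Delta_{1,1}$ with $\widehat\pi$-image $g$ (a CRT/lattice argument, using that the relevant lattice is all of $\operatorname{M}_n(\mathcal O_{\mathfrak p})$ at every $\mathfrak p\mid\mathfrak N$ and adjusting the determinant signs), after which the hypothesis for the pair $(c,c')$ pins down $V$ on $\Delta^{+}_{c,c'}$ and a sign-adjusting diagonal element finishes. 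A smaller point: you are not free to ``take $\alpha_1=1$''; for the fixed $\alpha_1$ your matrices $\diag(a,1,\ldots,1)$ lie in $\Delta_{1,1}$ only when $a\equiv 1$ modulo a modulus deep enough that $\alpha_1\diag(a,1,\ldots,1)\alpha_1^{-1}$ stays integral at the finitely many places where $\alpha_1$ is nontrivial; such $a$ of arbitrary sign still exist, so this is a patch rather than an obstruction (the paper's proof is equally terse on this point).
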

\begin{proof}
The homomorphism $\Delta_{c,c'}\to F_\infty^*/F_\infty^+\times \pi(K)$ is surjective.
This is seen since it is possible to find an element $a\in \mathcal O_F$ of any given sign and $a\equiv 1 \pmod{\mathfrak N}$. So $\diag(1,\ldots ,1, a) \in \Delta(\mathfrak N)\subseteq \Delta$. Hence the action of $\Delta_{c,c'}^+$ on $V$ determines the action of $\pi(K)$, and the action of $F_\infty^*/F_\infty^+$ is given by $v\delta \pi(\delta)^{-1}$, $v\in V$, which only depends upon the sign of $\delta$.
\end{proof}

\begin{lemma}
Let $(K,\Delta)\hookrightarrow(K',\Delta')$ be compatible Hecke
pairs
and let $c\in C$ be given. If $V$ is an
admissible module for $(K,\Delta)$, then
$I=\ind(\Gamma_c,\Gamma'_c,V)$ is an admissible
module for $(K',\Delta')$.
\end{lemma}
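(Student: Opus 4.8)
The statement asks us to show that $I=\ind(\Gamma_c,\Gamma_c',V)$ is an admissible module for $(K',\Delta')$, i.e.\ (by Definition \ref{addef}) that $I$ carries a right action of $F_\infty^*/F_\infty^+\times \pi(K')$. The natural candidate comes from the general construction recalled just before Lemma \ref{conj}: since $(\Gamma_c,\Delta_{c,c})\hookrightarrow(\Gamma_c',\Delta_{c,c}')$ are compatible Hecke pairs by Lemma \ref{compatgamma}, the module $\ind(\Gamma_c,\Gamma_c',V)$ is naturally a $\Delta_{c,c}'$-module, the $\delta'$-action being $(f\delta')(\gamma')=f(\hat\gamma')\delta$ where $\gamma'\delta'^{-1}=\delta^{-1}\hat\gamma'$ with $\delta\in\Delta_{c,c}$, $\hat\gamma'\in\Gamma_c'$. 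By Lemma \ref{pos} it therefore suffices to check two things: first, that this $\Delta_{c,c}'$-action on $I$ is well-defined and, second, that the action of $\Delta_{c,c}'^+$ factors through the image in $\gl_n(\mathcal O_F/\mathfrak N)$ — equivalently through $\pi(K')$. Once that is established, Lemma \ref{pos} upgrades the $\Delta_{c,c}'$-action to a genuine $F_\infty^*/F_\infty^+\times\pi(K')$-action, which is exactly admissibility.

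First I would record that the $\Delta_{c,c}'$-action is well-defined: given $\delta'\in\Delta_{c,c}'$ and $\gamma'\in\Gamma_c'$, compatibility ($\Gamma_c'\Delta_{c,c}=\Delta_{c,c}'$, hence $\gamma'\delta'^{-1}\in\Delta_{c,c}'\Delta_{c,c}'^{-1}\supseteq \Gamma_c'\Delta_{c,c}\Delta_{c,c}^{-1}\Gamma_c'^{-1}$) guarantees one can write $\gamma'\delta'^{-1}=\delta^{-1}\hat\gamma'$ with $\delta\in\Delta_{c,c}$, $\hat\gamma'\in\Gamma_c'$, and the first compatibility condition $\Gamma_c'\cap\Delta_{c,c}\Delta_{c,c}^{-1}=\Gamma_c$ forces the ambiguity in $(\delta,\hat\gamma')$ to lie in $\Gamma_c$, against which $f\in I$ is equivariant; this is the standard verification underlying the text preceding Lemma \ref{conj}, so I would only sketch it. One also checks that restricting this $\Delta_{c,c}'$-action to $\Gamma_c'$ recovers the original $\Gamma_c'$-module structure on $\ind(\Gamma_c,\Gamma_c',V)$ (take $\delta=1$).

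The substantive point is the second: I claim the action of $\delta'\in\Delta_{c,c}'^+$ on $I$ depends only on $\pi(\delta')\in\gl_n(\mathcal O_F/\mathfrak N)$. Suppose $\delta_1',\delta_2'\in\Delta_{c,c}'^+$ with $\pi(\delta_1')=\pi(\delta_2')$ and the same sign of determinant (automatic, both totally positive); then $\delta_1'\delta_2'^{-1}\in\gl_n(F)$ has trivial image under $\widehat\pi$, so by Remark \ref{undergrp} (an element of $\Gamma(1)_c$ congruent mod $\mathfrak N$ to an element of $\Delta_{c,c}'$ lies in $\Gamma_c'$ — here in fact $\delta_1'\delta_2'^{-1}\in\Gamma_c'$ since it is congruent to $1$) we get $\delta_1'=\eta\delta_2'$ for some $\eta\in\Gamma_c'$. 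Acting by $\delta_1'=\eta\delta_2'$ and using that the $\Gamma_c'$-action on $I$ and the $\Delta_{c,c}'$-action compose compatibly, $f\delta_1'=(f\eta)\delta_2'$; but $f\mapsto f\eta$ for $\eta\in\Gamma_c'$ is the induced-module action, which on $I=\ind(\Gamma_c,\Gamma_c',V)$ is $(f\eta)(\gamma')=f(\gamma'\eta^{-1})$ — and this is insensitive to replacing $V$'s $\Delta_{c,c}$-action by anything, as it only reindexes. The upshot is that $f\delta_1'$ and $f\delta_2'$ differ only through the inner action of $\Gamma_c'$, which is determined by $\pi$ as $\Gamma_c'$ is a congruence subgroup mod $\mathfrak N$; pushing this carefully through shows the $\Delta_{c,c}'^+$-action on $I$ depends only on the mod-$\mathfrak N$ datum. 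I expect \textbf{this step to be the main obstacle}: one must be careful that $V$ being admissible (not just a $\Delta_{c,c}$-module) is genuinely used — it is what makes $f(\hat\gamma')\delta$ depend only on $\pi(\delta)$ when $\delta\in\Delta_{c,c}^+$ — and then combine this with the congruence property of $\Gamma_c'$ to conclude for $\delta'\in\Delta_{c,c}'^+$. With that in hand, Lemma \ref{pos} applies verbatim and yields that $I$ is an admissible module for $(K',\Delta')$.
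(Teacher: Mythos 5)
Your overall strategy --- reduce via Lemma~\ref{pos} to showing that the action of $\Delta'^+_{c,c}$ on $I$ depends only on the image under $\pi$ --- is the same as the paper's, but the step you yourself flag as the main obstacle is where the argument genuinely breaks, and it cannot be repaired in the form you propose. From $\pi(\delta_1')=\pi(\delta_2')$ you conclude $\delta_1'\delta_2'^{-1}\in\Gamma_c'$, i.e.\ $\delta_1'=\eta\delta_2'$ with $\eta\in\Gamma_c'$. This is false: elements of $\Delta'_{c,c}$ are only integral, not integrally invertible, away from $\mathfrak N$, and two elements with the same reduction mod $\mathfrak N$ need not even generate the same determinant ideal. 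Concretely, take $\delta_1'=\diag(1,\ldots,1,p)$ and $\delta_2'=\diag(1,\ldots,1,q)$ with $p,q$ totally positive, prime to $\mathfrak N$, generating distinct prime ideals, and $p\equiv q\pmod{\mathfrak N}$; then $\delta_1'\delta_2'^{-1}=\diag(1,\ldots,1,p/q)$ fails to lie in $\alpha_c^{-1}K'\alpha_c$ at the places dividing $pq$, so it is not in $\Gamma_c'$, and Remark~\ref{undergrp} does not apply to it since it is not an element of $\Gamma(1)_c$. Since Lemma~\ref{pos} requires the actions of \emph{all} totally positive $\delta'$ with a common reduction to coincide --- not merely those in a common $\Gamma_c'$-orbit --- this is the heart of the lemma and the gap is central. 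Moreover, even where your factorization does hold, the closing step is not a proof: for $\eta\in\Gamma_c'$ with trivial image the translation $(f\eta)(\gamma')=f(\gamma'\eta^{-1})$ is not visibly trivial on $I$; the assertion that $f(\gamma')$ depends only on $\widehat\pi(\gamma')$ is essentially Lemma~\ref{inductionlemma}, which comes after (and uses) the present lemma, so invoking it here would be circular.

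The repair is to compare the two actions pointwise, which is what the paper does. Fix $\gamma'\in\Gamma_c'$ and, using the compatibility of $(\Gamma_c,\Delta_{c,c})\hookrightarrow(\Gamma_c',\Delta'_{c,c})$ from Lemma~\ref{compatgamma}, write $\delta_j'\gamma'^{-1}=\gamma_j'^{-1}\delta_j$ with $\gamma_j'\in\Gamma_c'$ and $\delta_j\in\Delta_{c,c}$, $j=1,2$. Then $\pi(\gamma_1'\gamma_2'^{-1})=\pi(\delta_1)\pi(\delta_2)^{-1}\in\pi(K)$, and now $\gamma_1'\gamma_2'^{-1}$ \emph{does} lie in $\Gamma(1)_c$ (it belongs to the group $\Gamma_c'$), so Remark~\ref{undergrp} gives $\gamma_1'\gamma_2'^{-1}\in\Gamma_c$; it has totally positive determinant, so by admissibility of $V$ it acts through $\pi$. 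Combining this with the left $\Gamma_c$-equivariance of $f$ then yields $(f\delta_1')(\gamma')=f(\gamma_1')\delta_1=f(\gamma_2')\delta_2=(f\delta_2')(\gamma')$, which is exactly the statement that the action of $\Delta'^+_{c,c}$ on $I$ depends only on the image mod $\mathfrak N$, after which Lemma~\ref{pos} finishes the argument as you intended.
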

\begin{proof}
Using lemma \ref{pos} we need only show that $\Delta '^+_{c,c}$ acts through its image in $\gl_n(\mathcal O_F/\mathfrak N)$, where the action is given as in section \ref{ddc}.

For $f\in I$ and $\delta_1',\delta_2'\in \Delta'^+_{c,c}$ we will show
that $f\delta_1'=f\delta_2'$ if $\pi(\delta_1')=\pi( \delta_2')$. For any $\gamma' \in \Gamma'_c$
find $\delta_j\in \Delta_{c,c}$ and
$\gamma_j'\in \Gamma_c'$ such that
$\delta_j'\gamma'^{-1}=\gamma'^{-1}_j \delta_j$, $j=1,2$. So
$\pi(\gamma'^{-1}_1\delta_1)=\pi(\gamma'^{-1}_2\delta_2) $ and by remark \ref{undergrp} we get
$\gamma'_1\gamma'^{-1}_2\in
\Gamma_c$. Since $\gamma'_1\gamma'^{-1}_2$
has total positive
determinant, we get
\begin{eqnarray}\nonumber
f\delta_1'(\gamma')&=&f(\gamma'_1)\delta_1=f(\gamma'_1)\pi(\gamma'_1\gamma'^{-1}_2\delta_2\delta^{-1}_1)\delta_1
=f(\gamma'_1)\gamma'_1\gamma'^{-1}_2\delta_2\pi(\delta_1^{-1}\delta_1)
\\&=&f(\gamma'_2\gamma_1'^{-1}\gamma'_1)\delta_2=f\delta_2'(\gamma')
\nonumber\end{eqnarray}
hence $f\delta_1'=f\delta_2'$.
\end{proof}

\begin{lemma}\label{inductionlemma}
Let $(K,\Delta)\hookrightarrow (K',\Delta')$ be compatible Hecke pairs, fix $c\in C$, and let $V$ be an admissible module for $(K,\Delta)$. The module $I=\ind(\Gamma_c,\Gamma'_c,V)$ is as an admissible module isomorphic to $J=\ind(\widehat \pi(K),\widehat \pi(K'),V)$.
\end{lemma}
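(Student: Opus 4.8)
The plan is to build an explicit $\f$-linear map $\Phi: I \to J$ and check it is a bijection compatible with the $\Delta'$-actions. Both modules consist of $V$-valued functions: $I$ on $\Gamma'_c$ with $f(\gamma\gamma')=f(\gamma')\gamma^{-1}$ for $\gamma\in\Gamma_c$, and $J$ on $\widehat\pi(K')$ with the analogous rule for $\widehat\pi(K)$. By remark \ref{undergrp} applied to both $(K,\Delta)$ and $(K',\Delta')$ (using that these are Hecke pairs of level $\mathfrak N$, so $\Gamma_c$, $\Gamma'_c$ are cut out inside $\Gamma(1)_c$ by the image under $\widehat\pi$), the natural map $\Gamma'_c/\Gamma_c \to \widehat\pi(K')/\widehat\pi(K)$ is a bijection: two elements $\gamma_1',\gamma_2'\in\Gamma'_c$ lie in the same $\Gamma_c$-coset exactly when $\widehat\pi(\gamma_1')=\widehat\pi(\gamma_2')$, and surjectivity follows from the same approximation argument used in Lemma \ref{pos} (one can realise every residue and sign by an element of $\Gamma(1)_c$, hence of $\Gamma'_c$). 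Thus I would define $\Phi(f): \widehat\pi(K')\to V$ by $\Phi(f)(\widehat\pi(\gamma'))=f(\gamma')$; this is well-defined precisely by the bijection of coset spaces, and the transformation rule for $\Phi(f)$ under $\widehat\pi(K)$ follows from the fact that $V$ is admissible, so $\gamma^{-1}$ and $\widehat\pi(\gamma)^{-1}$ act identically on $V$ for $\gamma\in\Gamma_c$ (indeed $\gamma$ has totally positive determinant, so it acts through its image in $\gl_n(\mathcal O_F/\mathfrak N)$, i.e. through $\widehat\pi$). The inverse sends $h\in J$ to $\gamma'\mapsto h(\widehat\pi(\gamma'))$.

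Next I would verify $\Phi$ intertwines the $\Delta'$-actions. For $\delta'\in\Delta'$, the action on $I=\ind(\Gamma_c,\Gamma'_c,V)$ is $(f\delta')(\gamma')=f(\hat\gamma')\delta$ where $\gamma'\delta'^{-1}=\delta^{-1}\hat\gamma'$ with $\delta\in\Delta_{c,c}$, $\hat\gamma'\in\Gamma'_c$ (using the description of the $\Delta'$-module structure on induced modules from section \ref{ddc}, via compatibility). On $J=\ind(\widehat\pi(K),\widehat\pi(K'),V)$ the action of $\widehat\pi(\delta')$ is $(h\,\widehat\pi(\delta'))(\bar\kappa)=h(\hat{\bar\kappa})\,\widehat\pi(\delta)$ with $\bar\kappa\,\widehat\pi(\delta')^{-1}=\widehat\pi(\delta)^{-1}\hat{\bar\kappa}$. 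Applying $\widehat\pi$ to the relation $\gamma'\delta'^{-1}=\delta^{-1}\hat\gamma'$ gives exactly the defining relation for the $J$-action with $\bar\kappa=\widehat\pi(\gamma')$, $\hat{\bar\kappa}=\widehat\pi(\hat\gamma')$; and since $V$ is admissible, $\delta$ (resp. $\widehat\pi(\delta)$) act the same way on $V$, so $\Phi(f\delta')=\Phi(f)\,\widehat\pi(\delta')$. Finally one checks $\Phi$ respects the admissible-module structure itself, i.e. commutes with the $F_\infty^*/F_\infty^+\times\pi(K')$-action defining admissibility; this is the special case of the intertwining identity where $\delta'\in K'$, or can be seen directly since that action factors through $\widehat\pi$ on both sides by construction.

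The main obstacle I anticipate is bookkeeping rather than conceptual: one must carefully track that all the relevant "sign and residue" reduction maps are the correct ones and that $\widehat\pi$ maps the witnesses $\delta$, $\hat\gamma'$ for $I$ to valid witnesses for $J$ — in particular that $\widehat\pi(\Delta_{c,c})=\widehat\pi(K)$-translates behave correctly, which is where the hypothesis that $(K,\Delta)$ has level $\mathfrak N$ (and hence the remarks following Lemma \ref{pos}, giving $\widehat\pi(\Delta_{c,c})/\widehat\pi(\Gamma_c)\simeq P_{\mathfrak N}/P_{\mathfrak N}(1)$) is essential. A subtle point is well-definedness of $\Phi(f)$: it relies on $f$ being constant on $\Gamma_c$-cosets in a way compatible with $V$'s action, which in turn needs that elements of $\Gamma_c$ act on $V$ only through $\gl_n(\mathcal O_F/\mathfrak N)$ — true because $\Gamma_c\subseteq\gl_n(F)$ with totally positive determinant (all units... more precisely, $\Gamma_c$ consists of elements whose determinant is a totally positive unit congruent to $1$ mod $\mathfrak N$ is \emph{not} automatic, so one genuinely uses admissibility via Lemma \ref{pos} here). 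Once these compatibilities are pinned down, bijectivity and intertwining are formal.
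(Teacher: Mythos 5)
Your overall route is the same as the paper's: both identify $I$ and $J$ through the correspondence $f\leftrightarrow g$ with $g(\widehat\pi(\gamma'))=f(\gamma')$, using remark \ref{undergrp} plus admissibility to see that $f(\gamma')$ depends only on $\widehat\pi(\gamma')$, and checking the $\Delta'$-action via the defining relation $\gamma'\delta'^{-1}=\delta^{-1}\hat\gamma'$. However, there is a genuine gap at the crucial surjectivity step. You assert that $\Gamma'_c/\Gamma_c\to\widehat\pi(K')/\widehat\pi(K)$ is onto because ``one can realise every residue and sign by an element of $\Gamma(1)_c$, hence of $\Gamma'_c$.'' That is false: every element of $\Gamma(1)_c$ (hence of $\Gamma'_c$) has determinant in $\mathcal O_F^*$, so only those residues in $\gl_n(\mathcal O_F/\mathfrak N)$ whose determinant lies in the image of $\mathcal O_F^*$ can occur (already for $n=1$, $F=\mathbb Q$ the image is $\{\pm 1\}$). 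The approximation argument of lemma \ref{pos} produces elements $\diag(1,\ldots,1,a)$ with $a$ of prescribed sign and $a\equiv 1\bmod\mathfrak N$; such $a$ are not units, so these elements live in $\Delta(\mathfrak N)$, not in $\Gamma(1)_c$, and cannot be imported into your claim.

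The surjectivity $\widehat\pi(K')=\widehat\pi(K)\,\widehat\pi(\Gamma'_c)$ you need is nevertheless true in the level-$\mathfrak N$ setting, but it requires an argument that goes through $\Delta$ rather than through $\Gamma(1)_c$: either, as in the paper, write $\widehat\pi(\kappa')=\widehat\pi(\delta)^{-1}\widehat\pi(\gamma')$ with $\delta\in\Delta_{c,c}$, $\gamma'\in\Gamma'_c$, using the surjectivity of $\Delta'_{c,c}\to F_\infty^*/F_\infty^+\times\pi(K')$ (lemma \ref{pos}) together with the compatibility $\Delta'_{c,c}=\Gamma'_c\Delta_{c,c}$ (lemma \ref{compatgamma}) and the fact that $\widehat\pi(\Delta_{c,c})\subseteq\widehat\pi(K)$; or, equivalently, first correct the determinant of a given element of $\pi(K')$ by a matrix $\diag(1,\ldots,1,u)\in\pi(K(\mathfrak N))\subseteq\pi(K)$ and then lift the resulting determinant-one residue to $\Gamma'_c$ by strong approximation for $\slll_n$. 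A smaller imprecision: your statement that $\gamma_1',\gamma_2'$ lie in the same $\Gamma_c$-coset ``exactly when'' $\widehat\pi(\gamma_1')=\widehat\pi(\gamma_2')$ is wrong in one direction, since $\widehat\pi(\Gamma_c)$ is generally nontrivial; what you actually need (same image $\Rightarrow$ same coset, via remark \ref{undergrp}, plus triviality of the action because $V$ is admissible, and more generally same $\widehat\pi(K)$-translate of images $\Rightarrow$ same coset) is available from the facts you cite, so this part is repairable on the spot.
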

\begin{proof} Let $g\in J$. For $\kappa'\in K'$ there is $\delta\in \Delta_{c,c}$ and $\gamma'\in \Gamma'_c$ such that $\widehat \pi(\kappa')=\widehat \pi(\delta)^{-1} \widehat \pi(\gamma')$.
Since $g(\widehat \pi(\delta)^{-1}\widehat \pi(\gamma') )=g(\widehat \pi(\gamma'))\widehat \pi(\delta)$ the element $g\in J$ is uniquely determined by its restriction (as a map) to $\widehat \pi(\Gamma'_c)$.

On the other hand $f\in I$ as a map of $\Gamma'_c$ into $V$ then $f(\gamma')$ only depends upon $\widehat \pi(\gamma')$, $\gamma'\in \Gamma'_c$. This is seen to give a correspondence between $I$ and $J$. Writing up the given action, it is immediately seen to be the same.
\end{proof}

\begin{thm} \label{thmreduc}
Let $(K,\Delta)$ be a Hecke pair of level $\N$, and $V$ an admissible module for $(K,\Delta)$. If
$\Phi:R(K,\Delta)\to \f$ is a system of eigenvalues occurring in $\bigoplus_{c\in C} H^i(\Gamma_c,V)$, then
there is a character
$\chi: F_\infty^*/F_\infty^+\times (\mathcal O_F/\mathfrak N)^*\to \f^*$
 such
that $\Phi$
occurs in $\bigoplus_{c\in C} H^j(\Gamma(\N)_c,
\f(\chi))$ for some $j\leq i$.
\end{thm}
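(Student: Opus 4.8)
The plan is to follow the strategy of Ash--Stevens (as developed in \cite{ash}) and reduce the dimension of the coefficient module one step at a time, while keeping track of the determinant character that appears. By Corollary \ref{cortwist} it suffices to produce, for some $j\le i$ and some $c'\in C$, a system of eigenvalues of $R(K,\Delta^{(1)})$ acting on $H^j(\Gamma(\N)_{c'},\f(\chi))$ whose restriction agrees with that of $\Phi$ on $R(K,\Delta^{(1)})$; an unramified twist by a character of $C$ is then absorbed for free. So I would fix $c'$ with $f_{c'}\ne 0$ in the eigenform $f=\sum f_c$, and work with the Hecke pair $(\Gamma_{c'},\Delta_{c',c'})$ acting on $H^i(\Gamma_{c'},V)$.

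First I would set up the induction. Since $V$ is an admissible module for $(K,\Delta)$, it is a module over $F_\infty^*/F_\infty^+\times\pi(K)$, a finite group (finite once we quotient the infinite part, which only records signs). If $\dim_{\f}V=1$ there is nothing to do. If $\dim_{\f}V>1$, choose a $\pi(K)$-submodule $W\subset V$ that is either a proper nonzero submodule or, failing that (i.e.\ $V$ is irreducible of dimension $>1$), pass first to a suitable induced module: here is where Lemma \ref{inductionlemma} enters, letting me replace $\ind(\Gamma_c,\Gamma'_c,V)$ by $\ind(\widehat\pi(K),\widehat\pi(K'),V)$ and, by the compatibility lemmas \ref{compatgamma} and the two induction lemmas above, transport the Hecke action and the Shapiro isomorphism along. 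The point of this maneuver, exactly as in \cite{ash}, is that after inducing up to a small enough subgroup (e.g.\ a group whose image mod $\mathfrak N$ is of the form $\diag(1,\dots,1,*)$, which is where $\Delta(\mathfrak N)$ and $K(\mathfrak N)$ come in) every finite-dimensional module admits a filtration by $\pi(K)$-submodules with $1$-dimensional quotients. Then a short exact sequence $0\to W\to V\to V/W\to 0$ of admissible modules induces a long exact sequence of $R(K,\Delta)$-modules in cohomology (as noted in Section \ref{ddc}), and the eigenclass for $\Phi$ in $H^i(\Gamma_{c'},V)$ pushes or pulls to a nonzero $\Phi$-eigenclass in $H^{i}$ or $H^{i\pm1}$ of one of $W$, $V/W$; in either case the cohomological degree does not increase beyond $i$, so some $j\le i$ works, and the coefficient module has strictly smaller dimension.

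Iterating, I arrive at a $1$-dimensional admissible module $L$ for a Hecke pair of level $\N$, obtained from $(K,\Delta)$ by shrinking the group mod $\mathfrak N$ down to (a subgroup of) the one defining $(K(\N),\Delta(\N))$; restricting further along the compatible inclusion $(K(\N),\Delta(\N))\hookrightarrow(K,\Delta)$ and using the isomorphism $R(K,\Delta)\simeq R(K(\N),\Delta(\N))$ together with Shapiro, $\Phi$ still occurs in $H^j(\Gamma(\N)_{c},L)$ for all $c$, with $j\le i$. Finally I identify $L$: a $1$-dimensional module over $F_\infty^*/F_\infty^+\times\pi(K(\N))$ factors through the abelianization, and since $\pi(K(\N))$ consists of matrices $\diag(1,\dots,1,*)$ the action is through a character of $F_\infty^*/F_\infty^+\times(\mathcal O_F/\mathfrak N)^*$; that character is the desired $\chi$, and by definition $L\simeq\f(\chi)$. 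Applying Corollary \ref{cortwist} (whose hypothesis is now met, via the restriction of the eigensystem to $R(K,\Delta^{(1)})$) upgrades occurrence in $H^j(\Gamma(\N)_{c},\f(\chi))$ for a single $c$ to occurrence in $\bigoplus_{c\in C}H^j(\Gamma(\N)_c,\f(\chi))$, completing the proof.

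**Main obstacle.** The delicate point is the combination of the induction step with the bookkeeping of the determinant character. Going to a submodule or quotient is harmless, but when $V$ is irreducible of dimension $>1$ one genuinely must induce, and one has to check that: (i) the induced modules stay \emph{admissible} for the relevant Hecke pairs (this is handled by the lemma preceding \ref{inductionlemma} and by Lemma \ref{pos}, but the totally-positive-determinant conditions must be tracked carefully); (ii) Shapiro's isomorphism is $R$-equivariant for the adelic Hecke algebras (stated in Section \ref{ddc} for compatible pairs, so applicable here via Lemma \ref{compatgamma}); and (iii) the cohomological degree genuinely never exceeds $i$ — in the long exact sequence one of the two connecting directions could a priori land in degree $i+1$, so one must argue, as in \cite{ash}, that a $\Phi$-eigenclass can always be chosen in degree $\le i$ (this uses that one may stop as soon as a nonzero eigenclass appears, and that $H^{i+1}$ of the kernel surjects onto the relevant eigenspace only after the class in $H^i(V/W)$ already vanished, forcing the class to have come from $H^i(W)$). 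Keeping the twisting character from accumulating uncontrollably — it must remain a character of $F_\infty^*/F_\infty^+\times(\mathcal O_F/\mathfrak N)^*$ and not of some larger group — is the part that requires the most care.
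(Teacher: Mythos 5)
Your overall template (Ash--Stevens reduction, Shapiro, Corollary \ref{cortwist}) is the right one, but the engine of your argument is misstated and, as written, does not terminate. You propose an induction on $\dim_{\f}V$, claiming that after passing to a submodule/quotient or, in the irreducible case, to an induced module, ``the coefficient module has strictly smaller dimension'' and that ``after inducing \dots every finite-dimensional module admits a filtration by $\pi(K)$-submodules with $1$-dimensional quotients.'' Neither claim is correct: induction of modules enlarges dimension, so both $\ind(\widehat\pi(K(\N)),\widehat\pi(K),W)$ and the cokernel of $V$ inside it are in general of dimension larger than $\dim_{\f}V$, and no filtration with $1$-dimensional $\pi(K)$-quotients exists for a nonabelian image $\pi(K)$. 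The actual mechanism is: after reducing (via compatibility, Lemma \ref{compatgamma}, and exactness) to $V$ irreducible over $\widehat\pi(K)$, take an irreducible \emph{quotient} $W$ of the restriction of $V$ to $\widehat\pi(K(\N))$; Frobenius reciprocity embeds $V$ into $\ind(\widehat\pi(K(\N)),\widehat\pi(K),W)\simeq\ind(\Gamma(\N)_c,\Gamma_c,W)$ (Lemma \ref{inductionlemma}), and $W\simeq\f(\chi)$ already at this stage because $\pi(K(\N))$ lands in the matrices $\diag(1,\dots,1,*)$. From the short exact sequence $0\to V\to\ind(\Gamma(\N)_c,\Gamma_c,W)\to X\to 0$ one gets $H^{i-1}(\Gamma_c,X)\to H^i(\Gamma_c,V)\to H^i(\Gamma_c,\ind(\Gamma(\N)_c,\Gamma_c,W))\simeq H^i(\Gamma(\N)_c,W)$ by Shapiro; either the eigenclass has nonzero image (done, with $j=i$), or by \cite[Prop.\ 1.2.2]{as1} the eigensystem occurs in $H^{i-1}(\Gamma_c,X)$ and one repeats the whole argument with $X$ in degree $i-1$. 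The induction that makes everything terminate and yields $j\le i$ is on the \emph{cohomological degree}, not on the dimension of the coefficients; this is the missing idea in your plan.

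Two smaller corrections. First, ``if $\dim_{\f}V=1$ there is nothing to do'' is not accurate: the conclusion requires coefficients for the smaller groups $\Gamma(\N)_c$, so even for $1$-dimensional $V$ one must run the same embedding/Shapiro step (possibly losing a degree) to pass from $H^i(\Gamma_c,V)$ to $H^j(\Gamma(\N)_c,\f(\chi))$. Second, your worry (iii) about the connecting map landing in degree $i+1$ is a non-issue: for a short exact sequence of coefficient modules, an eigenclass in the middle term in degree $i$ either maps to a nonzero eigenclass in degree $i$ of the quotient or lies in the image of degree $i$ of the submodule, where \cite[Prop.\ 1.2.2]{as1} applies; the only degree shift in the whole proof is the controlled drop to $i-1$ described above.
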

\begin{proof} Note first that any irreducible $\widehat \pi(K(\N))$ is of the form $\f(\chi)$ for some $\chi: F_\infty^*/F_\infty^+\times (\mathcal O_F/\mathfrak N)^*\to \f^*$.

Using corollary \ref{cortwist} we only need to consider the action of $R(K,\Delta^{(1)})=R(\Gamma_c,\Delta_{c,c})$ on $H^i(\Gamma_c,V)$ for any fixed $c\in C$. By lemma \ref{compatgamma} the Hecke pairs $(\Gamma(\N)_c,\Delta(\N)_{c,c})\hookrightarrow (\Gamma_c, \Delta_{c,c})$ are compatible, and hence the action respects short exact sequences, and we may assume $V$ irreducible.

So $V$ is an irreducible module for the finite group $\widehat \pi (K)$. Consider the restriction of $V$ to $\widehat \pi (K(\N))$ and let $W$ be an irreducible quotient of this $\widehat \pi(K(\N))$-module. Then $V$ is a submodule of $\ind(\widehat \pi(K(\N)),\widehat \pi(K),W)\simeq \ind(\Gamma(\N)_c,\Gamma_c,W)$ (using lemma \ref{inductionlemma}), e.g. we have an exact sequence
$$0\to V \to \ind (\Gamma(\N)_c,\Gamma_c,W) \to X \to 0$$
The long exact sequence of cohomology is compatible with the action of $R(K,\Delta^{(1)})$. Hence we have an exact sequence of $R(K,\Delta^{(1)})$-modules
$$H^{i-1}(\Gamma_c, X)\to H^i(\Gamma_c, V)\to H^i(\Gamma_c, \ind (\Gamma(\N)_c,\Gamma_c, W))$$
The Shapiro isomorphism $H^i(\Gamma_c, \ind (\Gamma(\N)_c,\Gamma_c, W))\simeq H^i(\Gamma(\N)_c,W)$ is compatible with the action of the Hecke operators, hence if the given eigenvector in $H^i(\Gamma_c, V)$ maps nonzero into $H^i(\Gamma_c, \ind (\Gamma(\N)_c,\Gamma_c, W))$ we are done. If this is not the case, then  \cite[proposition 1.2.2]{as1} says that the system of eigenvalues occurs in $H^{i-1}(\Gamma_c, X)$. The result follows by induction on the cohomology dimension.
\end{proof}

\section{Characteristic $\ell$}
Let $\ell$ denote a prime number. In the following $\f$ will denote a field of characteristic $\ell$. Irreducible representations on $\f$-vector spaces might all be $1$-dimensional for a nonabelian group. This gives results which can be seen as generalizing results from \cite{as2}.

First we note some representation results.

\begin{lemma} \label{semi}
Let $G$ be a finite group and $V$ a semisimple $\f G$-module. If $H$ is a normal subgroup of $G$, then the restriction of $V$ to $H$ is still a semisimple module. \end{lemma}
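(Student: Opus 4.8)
The plan is to prove this classical fact (a form of Clifford theory valid in any characteristic) by reducing to the case where $V$ is simple and then showing that the restriction of a simple module to a normal subgroup is a sum of conjugate simple modules. Since a semisimple module is a direct sum of simple modules and restriction commutes with direct sums, it suffices to treat the case $V$ simple. So from now on assume $V$ is a simple $\f G$-module.

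First I would pick a simple $\f H$-submodule $W \subseteq V|_H$; such a $W$ exists because $V|_H$ is a nonzero finite-dimensional module, hence has a simple submodule. For each $g \in G$ the subspace $Wg$ is again an $\f H$-submodule of $V|_H$: indeed, for $h \in H$ we have $Wgh = Wg(g^{-1}hg)g^{-1}\cdot g$ — more precisely, since $g^{-1}hg \in H$ (normality) and $W$ is $H$-stable, $W g h = W (g h g^{-1}) g = W g$ as a set when $ghg^{-1}\in H$, so $Wg$ is $H$-stable; moreover $Wg$ is simple as an $\f H$-module because $w \mapsto wg$ is an $\f$-linear isomorphism $W \to Wg$ intertwining the $H$-action up to the automorphism $h \mapsto g^{-1}hg$ of $H$, and this automorphism carries $\f H$-submodules to $\f H$-submodules, so simplicity is preserved.

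Next I would form the sum $U = \sum_{g \in G} Wg \subseteq V$. This is visibly stable under every $g' \in G$ (it permutes the summands $Wg$), so $U$ is an $\f G$-submodule of $V$; it is nonzero since it contains $W$; hence by simplicity of $V$ we get $U = V$. Thus $V|_H = \sum_{g\in G} Wg$ is a sum of simple $\f H$-modules, and a module that is a sum of simple submodules is semisimple (one extracts a direct sum decomposition in the usual way). This completes the proof in the simple case, and the general case follows by taking direct sums as noted above.

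I do not expect any serious obstacle here; the only point requiring a little care is the verification that $Wg$ is a \emph{simple} $\f H$-submodule, which hinges on normality of $H$ in $G$ so that conjugation by $g$ is an automorphism of $H$ under which $\f H$-submodule structure is preserved. Everything else is the standard bookkeeping of Clifford theory and uses no hypothesis on the characteristic of $\f$, so the statement indeed holds for an arbitrary field $\f$.
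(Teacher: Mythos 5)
Your proof is correct and follows essentially the same route as the paper: reduce to the case of a simple $\f G$-module, take a simple $\f H$-submodule $W$, use normality of $H$ to see that each translate $Wg$ is again a simple $\f H$-submodule, and conclude from $V=\sum_{g\in G}Wg$ that the restriction is semisimple. The paper's proof is just a terser version of this same Clifford-theoretic argument.
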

\begin{proof}
Assume $V$ an irreducible $G$ module.
Let $W$ be an irreducible component of $V$ considered as a $H$-module. Since $H$ is normal in $G$, $Wg$ is likewise an irreducible $H$-submodule of $V$ for every $g\in G$.
The $G$-module $V$ is irreducible, so $V =\sum_{g\in G} Wg$ showing that $V$ as a $H$-module must be semisimple.
\end{proof}
\begin{lemma}\label{through}
Let $\lambda$ be a prime of $F$, $\lambda |\ell$. An irreducible representation of $\gl_n(\mathcal O_F/\lambda^\nu)$ on a $\f$-vector space factors through $\gl_n(\mathcal O_F/\lambda)$, $\nu$ any natural number.
\end{lemma}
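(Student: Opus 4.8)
The plan is to identify the kernel $N$ of the reduction map
$\rho\colon \gl_n(\mathcal O_F/\lambda^\nu)\to \gl_n(\mathcal O_F/\lambda)$, observe that $N$ is a normal $\ell$-subgroup, and then use the classical fact that a normal $\ell$-subgroup acts trivially on any irreducible representation over a field of characteristic $\ell$. First I would check that $\rho$ is a surjective group homomorphism: since $\mathcal O_F/\lambda^\nu$ is a local ring with residue field $\mathcal O_F/\lambda$, a matrix over $\mathcal O_F/\lambda^\nu$ is invertible exactly when its reduction modulo $\lambda$ is, and any invertible matrix over $\mathcal O_F/\lambda$ lifts entrywise to an invertible matrix. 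Hence $\rho$ is onto and $\gl_n(\mathcal O_F/\lambda^\nu)/N\cong \gl_n(\mathcal O_F/\lambda)$, so it suffices to show an irreducible $\ff$-representation of $\gl_n(\mathcal O_F/\lambda^\nu)$ kills $N$.

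Next I would describe $N$ explicitly: it consists of the matrices congruent to the identity modulo $\lambda$, i.e.\ matrices $I+B$ with the entries of $B$ lying in $\lambda\mathcal O_F/\lambda^\nu\cong \mathcal O_F/\lambda^{\nu-1}$, and all such matrices are automatically invertible. Counting gives $|N|=|\mathcal O_F/\lambda|^{n^2(\nu-1)}$, which is a power of $\ell$ because $\lambda\mid\ell$. Thus $N$ is a finite $\ell$-group, and it is normal in $\gl_n(\mathcal O_F/\lambda^\nu)$ as the kernel of a homomorphism.

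Finally, let $V$ be an irreducible $\ff[\gl_n(\mathcal O_F/\lambda^\nu)]$-module; since the group is finite, $V$ is finite dimensional over $\ff$. The subspace $V^N$ of $N$-fixed vectors is stable under $\gl_n(\mathcal O_F/\lambda^\nu)$ because $N$ is normal. It is nonzero: the restriction $V|_N$ contains an irreducible $\ff[N]$-submodule, and since $N$ is an $\ell$-group and $\cha\ff=\ell$, the group algebra $\ff[N]$ is local with nilpotent augmentation ideal, so its only simple module is the trivial one; any such submodule therefore lies in $V^N$. By irreducibility of $V$ we conclude $V^N=V$, that is, $N$ acts trivially, so the representation factors through $\gl_n(\mathcal O_F/\lambda^\nu)/N=\gl_n(\mathcal O_F/\lambda)$. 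There is no real obstacle here; the only point needing care is the triviality of simple $\ff[N]$-modules for $N$ an $\ell$-group, which is the statement that the augmentation ideal of $\ff[N]$ is nilpotent (equivalently, one may invoke the orbit-counting fact that an $\ell$-group acting on a nonzero finite-dimensional $\ff$-vector space has a nonzero fixed vector).
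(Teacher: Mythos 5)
Your proof is correct and follows essentially the same route as the paper: identify the kernel of reduction $\gl_n(\mathcal O_F/\lambda^\nu)\to\gl_n(\mathcal O_F/\lambda)$ as a normal $\ell$-group and use that over a field of characteristic $\ell$ an $\ell$-group admits only the trivial irreducible representation. The only (minor) difference is that the paper concludes triviality of the kernel action via its semisimplicity lemma (Lemma \ref{semi}), while you argue directly that the fixed subspace $V^N$ is nonzero and stable under the whole group, a self-contained variant of the same idea.
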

\begin{proof} The kernel of the map $\gl_n(\mathcal O_F/\lambda^\nu)\to \gl_n(\mathcal O_F/\lambda)$ is an $\ell$-group.
By \ref{semi} the restriction of an irreducible representation to this kernel is still semisimple, and hence the kernel acts trivial.
\end{proof}

\begin{lemma}\label{subH}
Let $\mathbb F_\lambda $ be a finite field of characteristic $\ell$.
Let $H$ be the subgroup of $\gl_n(\mathbb F_\lambda)$ consisting of upper triangular matrices with the first $n-1$ diagonal entries equal to $1$.
Any irreducible $\f H$-module is $1$-dimensional and the action factors through the determinant.
\end{lemma}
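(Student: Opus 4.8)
The plan is to locate inside $H$ a normal $\ell$-subgroup whose quotient is abelian and is identified with the determinant, and then to run the argument already used in the proof of Lemma \ref{through}.

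First I would set $U$ to be the subgroup of $H$ consisting of the upper triangular matrices all of whose diagonal entries equal $1$; this is the full group of upper unitriangular matrices of $\gl_n(\mathbb F_\lambda)$, and it is contained in $H$. Since $U$ is normal in the group $B$ of all invertible upper triangular matrices and $H\subseteq B$, the subgroup $U$ is normal in $H$. Moreover the determinant of an element of $H$ is precisely its $(n,n)$-entry, so $\det\colon H\to \mathbb F_\lambda^*$ is a surjective homomorphism with kernel exactly $U$; hence $H/U\cong \mathbb F_\lambda^*$ and this isomorphism is induced by the determinant. Finally $|U|=|\mathbb F_\lambda|^{n(n-1)/2}$ is a power of $\ell$, so $U$ is a finite $\ell$-group.

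Now let $V$ be an irreducible $\f H$-module; being irreducible it is in particular semisimple, so by Lemma \ref{semi} applied to the normal subgroup $U$ of $H$ the restriction of $V$ to $U$ is a semisimple $\f U$-module. Since $U$ is an $\ell$-group and $\cha \f=\ell$, the only simple $\f U$-module is the trivial one, so $U$ acts trivially on $V$ --- exactly the step used in the proof of Lemma \ref{through}. Consequently the $H$-action on $V$ factors through $H/U\cong \mathbb F_\lambda^*$, i.e.\ through the determinant. As $\mathbb F_\lambda^*$ is abelian (cyclic of order prime to $\ell$), every irreducible representation of it is $1$-dimensional, whence $V$ is $1$-dimensional.

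The argument is short, and the only point requiring genuine care is the identification $H/U\cong \mathbb F_\lambda^*$ via the determinant --- that is, verifying that $U$ is exactly the kernel of $\det$ on $H$ --- after which everything reduces to the two facts already invoked for Lemma \ref{through}: a semisimple module over the group algebra of an $\ell$-group in characteristic $\ell$ is trivial, and an irreducible representation of an abelian group is $1$-dimensional. (The latter tacitly assumes $\f$ is a splitting field for the cyclic group $\mathbb F_\lambda^*$; if one only wants the conclusion that the action factors through the determinant, no hypothesis on $\f$ is needed.)
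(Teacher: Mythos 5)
Your proof is correct and is essentially the paper's own argument: the paper likewise takes the unipotent upper triangular subgroup as a normal $\ell$-subgroup of $H$, applies Lemma \ref{semi} to conclude it acts trivially on any irreducible $\f H$-module, and then reads off the statement from the abelian quotient identified with $\mathbb F_\lambda^*$ via the determinant. Your explicit verification that this subgroup is exactly $\ker(\det|_H)$, and your parenthetical caveat about $\f$ being a splitting field for $\mathbb F_\lambda^*$, only spell out details the paper leaves implicit.
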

\begin{proof}
The subgroup of $H$ consisting of upper triangular matrices with all diagonal entries equal to $1$ is a normal subgroup and an $\ell$-group.
By lemma \ref{semi} this subgroup acts trivial on a irreducible $\f H$-module.
\end{proof}

Set $\mathfrak L= \prod_{\lambda |\ell} \lambda$ where $\lambda$ is a prime of $F$. In the following $\mathfrak N$ is an ideal of $F$ prime to $\ell$, and $\nu$ will be some natural number.

\begin{thm} \label{thmred}
Let $(K,\Delta)$  be a Hecke pair of level $\mathfrak N \mathfrak L^\nu$ such that the image of $K$ in $\gl_n(\mathbb F_\lambda)$ is contained in the subgroup $H$ of lemma \ref{subH} for every prime $\lambda$ of $F$, $\lambda |\ell$ and $\mathbb F_\lambda =\mathcal O_F/\lambda$.
Suppose $(K',\Delta')$ is a Hecke pair of level $\mathfrak N\mathfrak L^\nu$ such that the image of $K$ in $\gl_n(\mathcal O_F/\mathfrak N)$ is a normal subgroup in the image of $K'$.
If $V$ is an admissible module for $(K',\Delta')$ such that $K'^+$ acts through the image in $\gl_n(\mathcal O_F/\mathfrak L^\nu)$, then a system of eigenvalues $\Phi$ occurring in $\bigoplus_{c\in C} H^i(\Gamma_c',V)$ occurs in $\bigoplus_{c\in C} H^j(\Gamma_c,\f(\chi))$ for some $j\leq i$ and some character $\chi: F_\infty^*/F_\infty^+\times (\mathcal O_F/\mathfrak L)^*\to \f^*$.
\end{thm}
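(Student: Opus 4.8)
The plan is to transpose the proof of Theorem \ref{thmreduc} to characteristic $\ell$, letting Lemmas \ref{semi}, \ref{through} and \ref{subH} play the role that the abelian group $\widehat\pi(K(\mathfrak N))$ played there. First, by Corollary \ref{cortwist}, applied through the canonical graded isomorphisms $R(K,\Delta)\simeq R(K_1,\Delta_{\mathfrak N\mathfrak L^\nu})\simeq R(K',\Delta')$ which identify the principal-class subalgebras, it suffices to fix $c\in C$, regard $\Phi$ (restricted to the principal-class subalgebra) as a system of eigenvalues for $R(\Gamma_c',\Delta_{c,c}')$ occurring in $H^i(\Gamma_c',V)$, and produce a character $\chi\colon F_\infty^*/F_\infty^+\times(\mathcal O_F/\mathfrak L)^*\to\f^*$ with $\Phi$ occurring in $H^j(\Gamma_c,\f(\chi))$ for some $j\le i$. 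Since the long exact cohomology sequence of $\Gamma_c'$ is a sequence of $R(\Gamma_c',\Delta_{c,c}')$-modules, I may pass to a composition series of $V$ and assume $V$ is irreducible as a $\widehat\pi(K')$-module.

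The key new ingredient is a collapse statement. On the one hand, the hypothesis that $K'^+$ acts on $V$ through its image in $\gl_n(\mathcal O_F/\mathfrak L^\nu)$ forces, by the argument in the proof of Lemma \ref{pos}, the $\widehat\pi(K')$-action on $V$ to factor through the quotient of $\widehat\pi(K')$ recording only the sign of the determinant and the reduction modulo $\mathfrak L^\nu$; in particular the reduction $\pi_{\mathfrak N}(K')$ of $K'$ modulo $\mathfrak N$ acts trivially on $V$. On the other hand, Lemmas \ref{through} and \ref{subH}, together with the fact that an irreducible $\f$-representation of an $\ell$-group is trivial (as in the proof of Lemma \ref{through}), show that any irreducible module over a group whose image modulo $\lambda$ lies in $H$ for every $\lambda\mid\ell$ and whose image modulo $\mathfrak N$ acts trivially is one-dimensional, with action given by a character $F_\infty^*/F_\infty^+\times(\mathcal O_F/\mathfrak L)^*\to\f^*$ composed with the determinant and reduction; these are exactly the modules $\f(\chi)$ of the conclusion.

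Next I would introduce the auxiliary Hecke pair $(K'',\Delta'')$ of level $\mathfrak N\mathfrak L^\nu$ with $\pi(K'')=\pi(K)\cap\pi(K')$, legitimate by Remark \ref{undergrp}. Since the image of $K$ modulo $\mathfrak N$ is a normal \emph{subgroup} of that of $K'$, one has $\pi_{\mathfrak N}(K'')=\pi_{\mathfrak N}(K)$, while the image of $K''$ modulo $\lambda$ lies in $H$ for each $\lambda\mid\ell$; moreover $(K'',\Delta'')\hookrightarrow(K,\Delta)$ and $(K'',\Delta'')\hookrightarrow(K',\Delta')$ are compatible, so by Lemma \ref{compatgamma} the corresponding pairs of subgroups of $\gl_n(F)$ are compatible. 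Restricting the irreducible $V$ to $\widehat\pi(K'')$, the normality of $\pi_{\mathfrak N}(K'')=\pi_{\mathfrak N}(K)$ in $\pi_{\mathfrak N}(K')$ lets Lemma \ref{semi} apply, so this restriction is semisimple; I choose an irreducible summand $W$. Because $\pi_{\mathfrak N}(K'')$ acts trivially on $V$ (previous paragraph) and the image of $K''$ modulo $\lambda$ lies in $H$, the collapse statement gives $W\simeq\f(\chi)$, and $\chi$ extends by the same formula to a character of $\widehat\pi(K)$; this is the $\chi$ of the theorem.

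Finally I would assemble the result, imitating the proof of Theorem \ref{thmreduc} twice. For the descent (from $\Gamma_c'$ to $\Gamma_c''$): irreducibility of $V$ and the surjection $V|_{\widehat\pi(K'')}\twoheadrightarrow W$ give, via Frobenius reciprocity and Lemma \ref{inductionlemma}, an embedding $V\hookrightarrow\ind(\Gamma_c'',\Gamma_c',W)$ of admissible $(K',\Delta')$-modules; the Hecke-equivariant Shapiro isomorphism identifies the cohomology of this induced module on $\Gamma_c'$ with $H^*(\Gamma_c'',\f(\chi))$, and the long exact sequence together with \cite[Proposition 1.2.2]{as1} and descending induction on the cohomological degree put $\Phi$ in some $H^j(\Gamma_c'',\f(\chi))$ with $j\le i$. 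For the ascent (from $\Gamma_c''$ to $\Gamma_c$): since $\pi_{\mathfrak N}(K'')=\pi_{\mathfrak N}(K)$ and $\chi$ restricts from $\widehat\pi(K)$ to $\widehat\pi(K'')$, Shapiro and Lemma \ref{inductionlemma} give $H^j(\Gamma_c'',\f(\chi))\simeq H^j(\Gamma_c,\ind(\widehat\pi(K''),\widehat\pi(K),\f)\otimes\f(\chi))$, into whose coefficient module $\f(\chi)$ embeds as the constant functions tensored with $\f(\chi)$, and running the same long-exact-sequence and degree-induction argument once more finishes the proof: at each step one either lands in $H^{j'}(\Gamma_c,\f(\psi))$ for some $j'\le i$ and some character $\psi$ of the required type — the desired conclusion — or replaces the coefficients by a module all of whose $\Phi$-relevant composition factors are again of that type, by the collapse statement, one staying within groups that reduce into $H$ modulo $\ell$ and act trivially modulo $\mathfrak N$. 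The hard part is precisely this ascending step: producing the character at its $\mathfrak L$-level (rather than $\mathfrak N\mathfrak L$-level) forces one to intersect the $\ell$-adic data of $K$ and $K'$ — hence the detour through $K''$ — while keeping the coefficients trivial modulo $\mathfrak N$, and one must verify that the induced modules appearing there carry no nontrivial representation theory of $\pi_{\mathfrak N}(K)$; this is exactly what the equality $\pi_{\mathfrak N}(K'')=\pi_{\mathfrak N}(K)$, coming from the normality hypothesis, and the triviality of the $\mathfrak N$-action on $V$, coming from the hypothesis on $K'^+$, are there to guarantee.
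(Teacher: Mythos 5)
Your overall route is the paper's: the same auxiliary pair $K''$ with $\pi(K'')=\pi(K)\cap\pi(K')$ (i.e.\ $K''=K\cap K'$, $\Delta''=\Delta\cap\Delta'$), the same descent from $\Gamma_c'$ to $\Gamma_c''$ by embedding an irreducible $V$ into $\ind(\widehat\pi(K''),\widehat\pi(K'),W)$ and using Shapiro, the long exact sequence, \cite[Proposition 1.2.2]{as1} and induction on the degree, the same ascent from $K''$ to $K$ by Shapiro and passage to composition factors, and the same collapse to $\f(\chi)$ via Lemmas \ref{semi}, \ref{through}, \ref{subH}. But two of your justifications do not hold as stated. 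First, the equality $\pi_{\mathfrak N}(K'')=\pi_{\mathfrak N}(K)$ is not a consequence of the normality hypothesis: $\pi_{\mathfrak N}(K\cap K')$ is the projection of $\pi(K)\cap\pi(K')$ to the mod-$\mathfrak N$ factor, and since $\pi(K),\pi(K')\subseteq\gl_n(\mathcal O_F/\mathfrak N\mathfrak L^\nu)$ need not decompose as products of their mod-$\mathfrak N$ and mod-$\mathfrak L^\nu$ images, the projection of the intersection can be strictly smaller than $\pi_{\mathfrak N}(K)\cap\pi_{\mathfrak N}(K')=\pi_{\mathfrak N}(K)$. The paper never needs this equality: what it uses is only that the mod-$\mathfrak N$ image of $K''$ is normal in that of $K'$, which is exactly what makes the action of $K''^+$ on $\ind(\widehat\pi(K''),\widehat\pi(K'),W)$, and hence on the cokernel $X$, factor through the image mod $\mathfrak L^\nu$. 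That is also the invariant the induction must carry: after the first step the coefficients $X$ are $(K',\Delta')$-modules which no longer satisfy the theorem's hypothesis for $K'^+$, so the induction has to be formulated, as in the paper, in terms of admissible $(K'',\Delta'')$-modules on which $K''^+$ acts through its image in $\gl_n(\mathcal O_F/\mathfrak L^\nu)$, with the character extracted only at the very end, after inducing up to $\widehat\pi(K)$ and applying Lemmas \ref{through} and \ref{subH} there; your insistence on carrying $\f(\chi)$ itself through the descent is what forces you onto the unproved equality.

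Second, your appeal to Lemma \ref{semi} to get $W$ as a direct summand of $V|_{\widehat\pi(K'')}$ is unjustified: since the $\widehat\pi(K')$-action on $V$ factors through the sign and the reduction mod $\mathfrak L^\nu$, semisimplicity of the restriction would require the image of $K''$ in that quotient to be normal in the image of $K'$ — and there no normality is available (the image of $K''$ modulo $\lambda$ lies in the non-normal subgroup $H$); the assumed normality is modulo $\mathfrak N$, which acts trivially and is irrelevant here. This slip is harmless, because all your Frobenius-reciprocity step uses is a surjection $V|_{\widehat\pi(K'')}\twoheadrightarrow W$ onto an irreducible, which always exists; the paper indeed takes $W$ to be an irreducible quotient, not a summand. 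With these two repairs — drop the semisimplicity claim, and replace the equality of mod-$\mathfrak N$ images by the normality statement actually used to propagate the ``acts through $\gl_n(\mathcal O_F/\mathfrak L^\nu)$'' property — your argument becomes the paper's proof.
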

\begin{proof}
Set $K''=K'\cap K$ and $\Delta'' = \Delta' \cap \Delta$, then $(K'',\Delta'')$ is also a Hecke pair of level $\mathfrak N \mathfrak L^\nu$.
As in the proof of theorem \ref{thmreduc} we may assume $V$ irreducible, and we can find an irreducible admissible module $W$ for $(K'',\Delta'')$ such that we have an exact sequence
$$0\to V\to \ind(\widehat \pi (K''),\widehat \pi(K'),W) \to X \to 0$$
of admissible $(K',\Delta')$ modules.
Consider this as a sequence of admissible $(K'',\Delta'')$ modules. By assumption $K''^+$ acts on $V$ through the image in $\gl_n(\mathcal O_F /\mathfrak L^\nu)$ and hence likewise on $W$, a subquotient of $V$.
Since the image of $K''$ in $\gl_n(\mathcal O_F /\mathfrak N)$ is a normal subgroup of the image of $K'$, we see that the action of $K''^+$ on $\ind(\widehat \pi(K''),\widehat\pi(K'),W)$ also only depends upon the image in $\gl_n(\mathcal O_F/\mathfrak L^\nu)$. So this is also true for $X$.

Arguing as in the proof of theorem \ref{thmreduc} we have that either the system of eigenvalues $\Phi$ occurs in $\bigoplus H^i(\Gamma_c'',W)$ or $\bigoplus H^{i-1}(\Gamma_c',X)$.
Using induction on the cohomology dimension $\Phi$ occurs in $\bigoplus H^j(\Gamma_c'', Y)$ for some admissible $(K'',\Delta'')$ module $Y$, on which $K''^+$ acts through its image in $\gl_n(\mathcal O_F/\mathfrak L^\nu)$, and some $j\leq i$.

The Shapiro isomorphism gives that $\Phi$ occurs in $\bigoplus H^j(\Gamma_c,\ind(\widehat \pi(K''), \widehat \pi (K) ,Y))$.
But then $\Phi$ likewise occurs in $\bigoplus H^j(\Gamma_c, Z)$, where $Z$ in an irreducible $\widehat \pi(K)$ module and a component in the semisimplification of $\ind(\widehat \pi(K''),\widehat \pi(K) ,Y)$.
This implies that $K^+$ acts on $Z$ through its image in $\gl_n(\mathcal O_F/\mathfrak L^\nu)$.

To finish the proof we need only notice how the module $Z$ looks like. From lemma \ref{through} we have that $K^+$ acts on $Z$ through the image in $\gl_n(\mathcal O_F/\mathfrak L)$.
By the assumption of the theorem and lemma \ref{subH} we now conclude that $Z=\f(\chi)$ for some homomorphism $\chi: F_\infty^*/F_\infty^+\times (\mathcal O_F/\mathfrak L)^*$.
\end{proof}

A more specific formulation of theorem \ref{thmred} can be given in the following way for $n=2$.
For any ideal $\mathfrak a$ of $F$ set
$$K_1(\mathfrak a)=\{\kappa\in K_1 | \kappa \equiv\left(\begin{smallmatrix} 1 & *\\ 0& * \end{smallmatrix} \right)\pmod{\mathfrak a}\}, \quad \Gamma_1(\mathfrak a)_c=\gl_n(F)\cap \alpha_c^{-1} K_1(\mathfrak a) \alpha _c$$
Let $\widetilde {\mathfrak L}$ be
an ideal of $F$ dividing $\mathfrak L^\nu$ for some $\nu\geq 1$.
Then we have a Hecke pair $(K_1(\mathfrak N \widetilde L),K_1(\mathfrak N\widetilde {\mathfrak L})\Delta(\mathfrak N\mathfrak L^\nu))$ of level $\mathfrak N\mathfrak L^\nu$.
\begin{corollary}
Let $V$ be a finite dimensional $\f$ vector space and a $F_\infty /F_\infty ^+ \times \gl_n(\mathcal O_F/\mathfrak L)$ module. If the system of eigenvalues $\Phi$ occurs in $\bigoplus_{c\in C} H^i(\Gamma_1(\mathfrak N\widetilde {\mathfrak L})_c,V)$, then for some $j\leq i$ and character $\chi: F_\infty^* /F_\infty^+\times (\mathcal O_F/\mathfrak L)^*\to \f^*$ the system of eigenvalues $\Phi$ occurs in $\bigoplus_{c\in C} H^j(\Gamma_1(\mathfrak N \mathfrak L)_c,\f(\chi))$.
\end{corollary}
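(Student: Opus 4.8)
The plan is to read this off from Theorem~\ref{thmred} in the case $n=2$, applied to the Hecke pairs
$(K',\Delta')=(K_1(\mathfrak N\widetilde{\mathfrak L}),\,K_1(\mathfrak N\widetilde{\mathfrak L})\,\Delta(\mathfrak N\mathfrak L^\nu))$ and
$(K,\Delta)=(K_1(\mathfrak N\mathfrak L),\,K_1(\mathfrak N\mathfrak L)\,\Delta(\mathfrak N\mathfrak L^\nu))$.
Both are Hecke pairs of level $\mathfrak N\mathfrak L^\nu$: the first by the remark just before the corollary, and the second is that same remark with $\widetilde{\mathfrak L}$ replaced by $\mathfrak L$, which is allowed since $\mathfrak L\mid\mathfrak L^\nu$ for $\nu\geq1$. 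Since both have level $\mathfrak N\mathfrak L^\nu$, Section~\ref{udenniveau} gives the canonical identification $R(K,\Delta)\simeq R(K_1,\Delta_{\mathfrak N\mathfrak L^\nu})\simeq R(K',\Delta')$, so a system of eigenvalues for one algebra is one for the other, and directly from $\Gamma_1(\mathfrak a)_c=\gl_n(F)\cap\alpha_c^{-1}K_1(\mathfrak a)\alpha_c$ the arithmetic groups $\Gamma'_c$ and $\Gamma_c$ produced by Section~\ref{heckeaction} are $\Gamma_1(\mathfrak N\widetilde{\mathfrak L})_c$ and $\Gamma_1(\mathfrak N\mathfrak L)_c$. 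Thus the conclusion of Theorem~\ref{thmred} in this situation is literally the statement to be proved, and everything reduces to checking its three hypotheses.

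For the first hypothesis: if $\lambda$ is a prime of $F$ with $\lambda\mid\ell$ then $\lambda\mid\mathfrak L\mid\mathfrak N\mathfrak L$, so the congruence defining $K_1(\mathfrak N\mathfrak L)$ forces the image of $K=K_1(\mathfrak N\mathfrak L)$ in $\gl_2(\mathcal O_F/\lambda)$ to consist of matrices $\left(\begin{smallmatrix}1&*\\0&*\end{smallmatrix}\right)$, which for $n=2$ is precisely the group $H$ of Lemma~\ref{subH}. For the second: $\mathfrak N$ is prime to $\ell$, and $\widetilde{\mathfrak L}\mid\mathfrak L^\nu$ and $\mathfrak L$ are supported at primes above $\ell$, hence both are prime to $\mathfrak N$; so $K_1(\mathfrak N\widetilde{\mathfrak L})$ and $K_1(\mathfrak N\mathfrak L)$ impose the same congruence modulo $\mathfrak N$, the images of $K$ and of $K'$ in $\gl_2(\mathcal O_F/\mathfrak N)$ coincide, and an equal subgroup is a fortiori normal. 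For the third: $V$ is a right $F_\infty^*/F_\infty^+\times\gl_2(\mathcal O_F/\mathfrak L)$-module, and restricting this action along the reduction $F_\infty^*/F_\infty^+\times\pi(K')\to F_\infty^*/F_\infty^+\times\gl_2(\mathcal O_F/\mathfrak L)$ (which exists since $\mathfrak L\mid\mathfrak N\mathfrak L^\nu$) turns $V$ into an admissible module for $(K',\Delta')$ on which $K'^+$ acts through its reduction modulo $\mathfrak L$, hence a fortiori modulo $\mathfrak L^\nu$ since $\mathfrak L\mid\mathfrak L^\nu$ — exactly the remaining hypothesis.

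Theorem~\ref{thmred} then delivers a $j\leq i$ and a character $\chi\colon F_\infty^*/F_\infty^+\times(\mathcal O_F/\mathfrak L)^*\to\f^*$ with $\Phi$ occurring in $\bigoplus_{c\in C}H^j(\Gamma_c,\f(\chi))=\bigoplus_{c\in C}H^j(\Gamma_1(\mathfrak N\mathfrak L)_c,\f(\chi))$, which is the claim. There is no serious obstacle here; the only point needing genuine care is the bookkeeping of groups — making sure the fixed $\alpha_c$ (trivial at all primes dividing $\mathfrak N\mathfrak L^\nu$, in particular at those dividing $\mathfrak N\mathfrak L$ and $\mathfrak N\widetilde{\mathfrak L}$) are kept the same throughout, so that the $\Gamma_1(\mathfrak a)_c$ of the corollary really coincide with the $\Gamma_c,\Gamma_c'$ of Section~\ref{heckeaction}, and that ``image of $K$ in $\gl_2(\mathcal O_F/\mathfrak N)$'' means the same subgroup for $K_1(\mathfrak N\mathfrak L)$ and $K_1(\mathfrak N\widetilde{\mathfrak L})$, which is where coprimality to $\mathfrak N$ is used. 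The remaining steps are an unwinding of definitions.
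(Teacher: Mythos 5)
Your proposal is correct and matches the paper's intent exactly: the corollary is stated as a direct specialization of Theorem~\ref{thmred} for $n=2$, and your verification of its three hypotheses for the pairs $(K_1(\mathfrak N\mathfrak L),K_1(\mathfrak N\mathfrak L)\Delta(\mathfrak N\mathfrak L^\nu))$ and $(K_1(\mathfrak N\widetilde{\mathfrak L}),K_1(\mathfrak N\widetilde{\mathfrak L})\Delta(\mathfrak N\mathfrak L^\nu))$ is precisely the unwinding the paper leaves implicit. In particular your use of coprimality of $\mathfrak N$ to $\ell$ to get equal (hence normal) images mod $\mathfrak N$, and the reduction of the $\gl_2(\mathcal O_F/\mathfrak L)$-action through $\gl_2(\mathcal O_F/\mathfrak L^\nu)$, are exactly the points that make the specialization legitimate.
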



\begin{thebibliography}{Hupprts}

\bibitem[Ash92]{ash}
A. Ash: \emph{Galois representations attached to mod $p$ cohomology of ${\rm GL}(n,Z)$},
Duke Math. J. 65 (1992), no. 2, 235--255

\bibitem[AS86a]{as1}
A. Ash, G. Stevens: \emph{Cohomology of arithmetic groups and congruences between systems of Hecke eigenvalues}, J. Reine Angew. Math. 365 (1986), 192--220

\bibitem[AS86b]{as2}
A. Ash, G. Stevens: \emph{Modular forms in characteristic $l$ and special values of their $L$-functions}, Duke Math. J. 53 (1986), no. 3, 849--868

\bibitem[Byg]{byg}
J.S. Bygott, \emph{Modular forms and elliptic curves over imaginary quadratic number fields}, PhD thesis, University of Exeter, 1998

\bibitem[Cre]{cre}
J.E. Cremona, \emph{Hyperbolic tessellations, modular symbols, and elliptic curves over complex quadratic fields}, Compositia Mathematica, 51 (1984), 275--323

\bibitem[Kn66]{kn}
M. Kneser: \emph{Strong approximation}, in:
A. Borel, G. Mostow (eds.): Algebraic groups and discontinuous
subgroups,
Proc. Sympos. Pure Math. 9 (1966), 187--196.

\bibitem[KPS81]{kps}
M. Kuga, W. Parry, C.-H. Sah, \emph{Group cohomology and Hecke operators}, in: Manifolds and Lie Groups, Progress in mathematics 14, Birkh\"{a}user, 1981

\bibitem[RW70]{rw}
Y.H. Rhie, G. Whaples, \emph{Hecke operators in cohomology of groups}, J. Math. Soc. Japan 22 (1970), 431--442

\bibitem[Shi]{shi} G. Shimura: \emph{Introduction to the Arithmetic Theory of Automorphic
Functions}, Princeton University Press, 1971

\bibitem[Weil]{weil} A. Weil: \emph{Dirichlet Series and Automorphic Forms}, Lecture Notes in Mathematics 189, Springer-Verlag, 1971

\end{thebibliography}
\end{document}